\documentclass{article}
\usepackage[utf8]{inputenc}
\usepackage{amsmath,amsthm, amssymb, amsfonts}
\usepackage{todonotes}
\usepackage{kantlipsum}
\usepackage{wrapfig}
\usepackage{paralist}
\usepackage{fullpage}
\usepackage{hyperref}
\usepackage{graphicx}
\usepackage{wrapfig}
\usepackage[]{algorithm2e}
\raggedbottom
\allowdisplaybreaks[1]
\usepackage{sistyle} 
\newcommand\bSI[1]{{\small[\SI{}{#1}]}}

\makeatletter
\newlength\unitwdth
\newlength\numwdth
\settowidth\unitwdth{\bSI{m^3.kg^{-1}.s^{-2}}~}
\settowidth\numwdth{Subsect 99}
\newlength\tdima
\newcommand\SIdescr[2]{%
    \setlength\tdima{\linewidth}%
    \addtolength\tdima{\@totalleftmargin}%
    \addtolength\tdima{-\dimen\@curtab}%
    \addtolength\tdima{-\unitwdth}%
    \addtolength\tdima{-\numwdth}%
    \parbox[t]{\tdima}{%
        #1
        \leaders\hbox{$\m@th\mkern \@dotsep mu\hbox{\tiny.}\mkern \@dotsep mu$}%
        \hfill
        \ifhmode\strut\fi
        \makebox[0pt][l]{%
            \makebox[\unitwdth][l]{}%
            \makebox[\numwdth][r]{#2}}}}
\makeatother

\newcommand{\Hil}{\mathcal{H}}

\newcommand{\Acal}{\mathcal{A}}
\newcommand{\Bcal}{\mathcal{B}}

\newcommand{\Wcal}{\mathcal{W}}

\newcommand{\Z}{\mathbb{Z}}
\newcommand{\N}{\mathbb{N}}

\newcommand{\R}{\mathbb{R}}

\newcommand{\psitilde}{\tilde{\psi}}

\newcommand{\Fcal}{\mathcal{F}}

\newcommand{\fa}{, \quad \text{ for all }}

\newcommand{\bdWaveS}{\mathcal{W}} 
 
\newcommand{\Lp}[2]{L^{#1}{#2}}

\DeclareMathOperator{\suppp}{supp \,}

\def\diag{{\text{\rm diag}}}

\newtheorem{theorem}{Theorem}[section]
\newtheorem{remark}[theorem]{Remark}

\newtheorem{definition}[theorem]{Definition}

\newtheorem{corollary}[theorem]{Corollary}
\newtheorem{assumption}[theorem]{Assumption}

\title{Approximation properties of hybrid shearlet-wavelet frames for Sobolev spaces}

\author{Philipp Petersen\thanks{Mathematical Institute, University of Oxford, Andrew Wiles Building, Woodstock Road, Oxford, OX2 6GG, UK, e-mail: \texttt{Philipp.Petersen@maths.ox.ac.uk}} 
\and Mones Raslan\thanks{Institut f\"ur Mathematik, Technische Universit\"at Berlin, Straße des 17.~Juni 136, 10623 Berlin, Germany, e-mail: \texttt{raslan@math.tu-berlin.de}}
} 

\begin{document}
\maketitle
\begin{abstract}
In this paper, we study a newly developed hybrid shearlet-wavelet system on bounded domains which yields frames for $H^s(\Omega)$ for some $s\in \N$, $\Omega \subset \R^2$. We will derive approximation rates with respect to $H^s(\Omega)$ norms for functions whose derivatives admit smooth jumps along curves and demonstrate superior rates to those provided by pure wavelet systems. 
These improved approximation rates demonstrate the potential of the novel shearlet system for the discretization of partial differential equations. Therefore, we implement an adaptive shearlet-wavelet-based algorithm for the solution of an elliptic PDE and analyze its computational complexity and convergence properties.
\end{abstract}

\textbf{Keywords:} shearlets, wavelets, Sobolev spaces, Approximation properties

\textbf{MSC classification:} 42C40, 65M60, 41A25, 65T99, 94A12

\section{Introduction}

A well-established paradigm of applied harmonic analysis is the usage of representation systems to store, analyze and manipulate data. An especially famous representation system is provided by wavelets (see e.g. \cite{TenLectures}), which can be considered as a standard tool in signal and image processing as well as in the numerical analysis of partial differential equations. 
While being perfectly suited to approximate one-dimensional functions with point singularities, these systems admit one serious defect in 2D. In fact, two-dimensional wavelet systems yield significantly suboptimal approximation rates when dealing with functions that admit anisotropically structured singularities. Most importantly curvilinear singularities cannot be handled adequately. 
This fact is even more severe since such structures appear very frequently in natural data as, for instance, virtually every photograph admits at least one jump in color value along a line or a curve. 

As a remedy anisotropic directional systems were invented, starting with curvelets \cite{ candes1999curvelets,CurveletsIntro}. Later contourlets \cite{DoV2003Contourlets} and shearlets \cite{LLKW2007} were developed. 
All of these systems yield optimal approximation rates of functions that exhibit curvilinear singularities. 
From these systems, shearlets stand out due to their unique combination of desirable features which include a unified treatment of digital and continuous realm, compactly supported elements, fast implementations, and optimal approximation rates. 
Besides, the shearlet transform can be interpreted as a unitary representation of the so-called shearlet group. This offers a natural definition of associated smoothness spaces, customarily called shearlet coorbit spaces \cite{DKST2009, DahST2011coorbits}.
In addition, 2D wavelet systems, as well as shearlet systems, are both examples of the concept of wavelets with composite dilations \cite{GuoLLW2006theoryofcompdil}, making both systems very close conceptually such that we can think of shearlets as an extension of 1D wavelets to 2D which retains the optimal approximation properties for functions with jump singularities.

It is because of these properties that a very effective line of research deals with the identification of application areas of wavelets, where improvements can be made by switching to shearlet systems. Examples where wavelet results were improved using shearlets include denoising and inpainting \cite{shearLab, EasLC2009ShearDen}, edge detection \cite{king2015shearlet}, regularization of ill-posed problems \cite{BubLZBG2015roirec, HauS2013convexSegShear} and the reconstruction from Fourier samples \cite{Ma2016GenSamp}.

One of the strongholds of wavelet methods is their remarkable ability to yield efficient discretizations of PDEs. These discretizations lead to very effective numerical algorithms, which we will review below. These results are only possible because wavelet systems can be constructed to yield frames or Banach frames for Sobolev spaces. Additionally, wavelet systems can be built on bounded domains also incorporating boundary conditions. Since PDEs are usually considered on bounded domains, this property is crucial.

Following the idea laid out earlier, we plan to use shearlets---just like wavelets---for the discretization of PDEs. We build our results on a recently developed shearlet system on bounded domains \cite{GroKMP2015shearbddom}, which yields frames for $H^s(\Omega)$, while allowing for boundary conditions.

\subsection{Motivation: Adaptive frame methods}

A first important step in utilizing systems from harmonic analysis for the solution of partial differential equations was achieved by Cohen, Dahmen, and DeVore in \cite{cohen2001adaptive} who developed a wavelet-based solver for linear, elliptic PDEs guaranteeing optimal convergence and complexity provided that the solution of the PDE contains only point singularities. The results from \cite{cohen2001adaptive} were extended in \cite{Ste2003,DahFR2007,dahRWFS2007steepdesc} such that it becomes possible to work with general frames instead of wavelet bases. Furthermore \cite{DahFPRW2009nonlinFrame} broadened the results of \cite{DahFR2007} to nonlinear elliptic PDEs. 

Utilizing anisotropic frames for solving PDEs numerically is a relatively new topic, mainly due to the difficulty of constructing them on bounded domains.
Let us shortly review which conditions should be fulfilled such that the methods introduced in the aforementioned papers work efficiently. 

For linear problems, the aim is to obtain a well-conditioned discrete system of linear equations equivalent to the PDE by using a frame, which then can be solved by standard numerical methods. By now there exist two main approaches to end up with such a discrete system: the first one, introduced in \cite{Ste2003}, uses frames for the Sobolev space $H^s(\Omega),$ the solution space of the PDE. The second one uses the concept of Gelfand frames, where a frame for $L^2(\Omega)$ is used, which has the additional property that its synthesis operator is bounded as a map from a weighted sequence space into $H^s(\Omega)$ and the analysis operator with respect to its dual frame is a bounded map from $H^{s}(\Omega)$ into the sequence space.

Of course, in both cases, the frames should be constructed in such a way that boundary conditions on the PDE can be imposed. Furthermore, the involved frame should yield optimally sparse approximations to the solution in order to obtain optimal asymptotic convergence rates of the numerical algorithm to the solution with low computational work and high accuracy.

So far, some first attempts have been made in utilizing anisotropic frames for solving elliptic PDEs. In \cite{EGO14_1305,GO14_1334} optimal ridgelet-based solvers were developed for linear advection equations, whereas in \cite{ResAniStructures,DahKLSW2015} shearlet-based solvers for general advection equations were developed. Although these works constitute major successes in advancing anisotropic frame-based solvers, it was not possible to impose boundary conditions. 

To overcome these problems, a hybrid shearlet-wavelet frame was constructed in \cite{GroKMP2015shearbddom} which will be further reviewed in the following subsection.

\subsection{Anisotropic multiscale systems on bounded domains} 

Shearlet systems on $\R^2$ admit two central properties. First of all, it was demonstrated in \cite{GuoL2007, KLcmptShearSparse2011}, that they admit optimal approximation rates for so-called cartoon-like functions. Here, cartoon-like functions are functions which are piecewise $C^2$-functions on $\R^2$ with a $C^2$-discontinuity curve. 
In fact, using shearlets, the $L^2$-error of best $N$-term approximation for cartoon-like functions is decaying with a rate of $N^{-1}$ (up to logarithmic terms)---a considerable improvement compared to the approximation rate $N^{-1/2}$ provided by wavelets. We also mention a recent extension of analyzing the approximation rates of functions whose higher-order derivatives are cartoon-like functions \cite{Pet2014discontderiv}. Also in this scenario shearlets yield superior approximation rates over wavelets.

The second cornerstone of shearlet theory is provided by the fact that shearlet systems yield stable decompositions and reconstructions of functions in $L^2(\R^2)$, more specifically they form frames for $L^2(\R^2)$. A construction of compactly supported shearlets admitting this property was provided in \cite{KGLConstrCmptShear2012}.

While both these properties appear very beneficial especially given our long-term goal to use shearlets as a discretization tool for partial differential equations, we can observe that the standard construction is not yet fully satisfactory for that task. In fact, a significant obstacle that needs to be tackled is that the standard construction only constitutes a representation system for $L^2(\R^2)$, while a PDE is usually defined on a bounded domain $\Omega\subset\R^2.$ Therefore, it is necessary to introduce multiscale systems on bounded domains, which retain the frame and approximation properties of its $\R^2$ counterpart. 

After extensive research, this task is quite well understood for wavelets (see for example \cite{CohDauVial, CDDBoundaryWavelets,Coh2000, CanTU1999WaveElMeth}) but there are still open questions despite the isotropic character of wavelets. All the constructions of wavelets on bounded domains hinge upon the multiresolution structure of the systems in order to construct boundary adapted elements. Therefore it is clear that shearlet systems for bounded domains are even harder to construct since such a structure is missing for these systems. Furthermore, the anisotropic shape of the support can intersect the boundary to various degrees and at various angles requiring different boundary adaptation for each element. 

The first attempt in this direction has been made in \cite{GLShearletsonBoundedDomains2012}, where an $L^2$-frame with optimal approximation properties for cartoon-like functions was constructed. Unfortunately, the resulting system is not boundary adapted; hence it is not possible to characterize Sobolev spaces or impose boundary conditions on a PDE. Another attempt has been made in \cite{ResAniStructures,DahKLSW2015}, but the resulting systems neither constitute frames for $L^2(\Omega)$ nor characterize Sobolev spaces. 

A system that provides all the desired properties was finally developed in \cite{GroKMP2015shearbddom, DissPP}. It was demonstrated how to construct a hybrid frame for $H^s(\Omega)$, where $s \in \N_0= \N \cup \{0\}$ and $\Omega \subset \R^2$ by combining a shearlet frame with a wavelet frame on $\Omega$. Roughly speaking, the resulting frame consists in one part of all elements of some shearlet frame for $H^s(\R^2)$ with compact support (see for instance \cite{KGLConstrCmptShear2012, DahST2011coorbits} for a construction) the support of which is fully contained in $\Omega$. Additionally, boundary adapted wavelets in a small tubular region around the boundary are included in the system, so that it becomes possible to impose boundary conditions.

If a frame for $L^2(\Omega)$ is constructed in the just described way, then it can be shown that the system characterizes Sobolev space norms by weighted $\ell_2$ norms of the associated analysis coefficients. Moreover, it was numerically established in \cite{DissPP} that the system also yields a Gelfand frame so that it can be used for the numerical solution of PDEs using the Gelfand frame approach introduced in \cite{DahFR2007}.
Since only a few wavelets are used in the construction, it is still possible to show that the resulting system approximates an extended class of cartoon-like functions, defined on $\Omega$ optimally.

Additionally, by changing the construction slightly, the system also constitutes a frame for $H^s(\Omega)$, which is the set-up we will use in this work. In this case, the approximation properties of the system, at least with respect to $H^s(\Omega)$ norms have not yet been analyzed in \cite{GroKMP2015shearbddom}. 

\subsection{Our contribution}
In this paper, we provide further studies of a hybrid shearlet-wavelet frame for $H^s(\Omega)$. Our analysis is two-fold. First of all, we establish novel approximation rates for functions with curvilinear singularities within their derivatives with respect to pure shearlet frames for Sobolev spaces on $\R^2$ as well as on bounded domains $\Omega$ with respect to a hybrid shearlet-wavelet frame. We will establish improved rates over pure wavelet systems. Secondly, we implement an adaptive numerical algorithm for the solution of a model PDE. We will observe that the convergence of this algorithm matches our theoretical results and is of higher-order than that provided by wavelet discretizations. 
\subsection{Outline}

The paper is organized as follows. First of all, in Section 2 we present the main concepts needed throughout subsequent sections. A short revision of shearlets on $H^s(\R^2)$ for $s\in \N_0$ is included. Most importantly, it also contains a review of hybrid shearlet-wavelet frames on bounded domains for $H^s(\Omega)$ for some bounded domain $\Omega\subset\R^2$ which were introduced in \cite{GroKMP2015shearbddom,DissPP}. In Section 3 we prove approximation results with respect to the best $N$-term approximation in the $H^s(\Omega)$-norm. In particular, we will provide fast approximation rates for functions which have cartoon-like first- or higher-order derivatives and will also explain why these functions appear frequently as solutions of elliptic PDEs. In Section 4 we implement an adaptive algorithm based on the constructed Sobolev frame, solving an exemplary PDE. We will observe very fast convergence rates, which thereby highlight the potential of the new system.

\section{Preliminaries}

In this section, we first introduce shearlet and wavelet systems as well as basic notation.

\subsection{Basic notation}

For any Banach space $(X,\left\|\cdot \right\|_X)$ let $X'$ be its topological dual. We furthermore use the usual multiindex notation: for $ \textbf{a}= (a_1,...,a_d)\in \N_0^d$ let $|\textbf{a}|=a_1+...+a_d$ and $D^{\textbf{a}}=D_1^{a_1}\cdot\cdot\cdot D_d^{a_d}.$ On $\R^d$ the Euclidean scalar product shall be denoted by $\langle x, y\rangle$ and the induced norm by $|x|$ for $x,y\in\R^d.$ We also denote by $|x|$ the absolute value for some $x\in\mathbb{C}$. 
For a measurable subset $\Omega\subseteq \R^d$ denote by $L^p(\Omega),p\in [1,\infty]$ the usual Lebesgue spaces and for a countable index set $\Lambda$ and $p\in [1,\infty]$ by $\ell^p(\Lambda)$ the usual sequence spaces. The cardinality of some set $I$ shall be denoted by $\# I$ and the Lebesgue measure of some measurable set $\Omega\subset \R^2$ by $|\Omega|.$ The Fourier transform we use is given by $(\Fcal f)(\xi)=\hat{f}(\xi)=\int_{\R^d}f(x)e^{-2\pi i \langle x,\xi \rangle}dx$ for $f\in L^1(\R^d)\cap L^2(\R^d)$ which can be uniquely extended to $L^2(\R^d).$ We note that for $f\in L^2(\R^d)$ the Plancherel identity $\left\|f\right\|_2=\left\|\Fcal f\right\|_2$ holds and $\Fcal^{-1}$ exists. For any subset $\Omega\subseteq\R^d$ let $\partial \Omega$ be its boundary. 
For some Hilbert space $\Hil$ and some closed linear subspace $M$ of $\Hil,$ let $\mathbf{P}_M$ denote the orthogonal projection onto $M.$ For some normed space $X$, some $M\subseteq X$ and $x\in X$ let $d(x,M):=\inf \{\left\|x-y \right\|_X:y\in M \}$ denote the distance of $x$ to $M.$ Furthermore, for some $r>0$ let $B_r(x)$ denote the open ball with radius $r$ and center $x.$ If we have $A\leq C\cdot B$ for two quantities $A,B$ and some constant $C>0$ we write $A\lesssim B$ and $A\gtrsim B$ if $B\lesssim A.$ Furthermore $A\sim B$ shall be written if $A\lesssim B$ and $B\lesssim A$ hold. Furthermore, for a real number $x$ we denote by $\lfloor x \rfloor$ the largest integer less than $x$.
 
\subsection{Frames}

In this subsection, we will introduce \emph{frames} for Hilbert spaces. 
\begin{definition}[\cite{DSFrames1952, Chr}]
A countable subset $\left(\varphi_\lambda \right)_{\lambda\in\Lambda}$ of some Hilbert space $\Hil$ is called a \emph{frame} for $\Hil,$ if there exist $0<\Acal\leq \Bcal<\infty$, such that \begin{align} \label{eq:frameprop}
    \Acal \left\| f \right\|_{\Hil}^2\leq \sum_{\lambda\in\Lambda} |\langle f,\varphi_\lambda\rangle_{\Hil}|^2 \leq \Bcal \left\| f \right\|_{\Hil}^2 \fa f\in \Hil.
\end{align}

\end{definition}

For every frame $\Phi=(\varphi_\lambda)_{\lambda\in\Lambda}$, we can define the linear and bounded \emph{analysis operator}  ${T_{\Phi}:\Hil\rightarrow\ell^2(\Lambda),}$ \\  ${f\mapsto (\langle f,\varphi_\lambda\rangle_{\Hil})_{\lambda\in\Lambda}}$ as well as its adjoint, the \emph{synthesis operator,} given by  ${T_{\Phi}^{*}:\ell^2(\Lambda)\rightarrow\Hil,}$ \\ ${(c_\lambda)_{\lambda\in\Lambda}\mapsto \sum_{\lambda\in\Lambda} c_\lambda \varphi_\lambda.}$ Then, due to the frame property \eqref{eq:frameprop}, it is easy to see   that the \emph{frame operator} $S_\Phi=T^*_\Phi T_\Phi$ is a bounded and boundedly invertible operator from $\Hil$ onto itself. Therefore, we can define the \emph{canonical dual frame} $(\varphi_\lambda^{\mathrm{dual}})_{\lambda\in\Lambda}:=(S_\Phi^{-1}\varphi_\lambda)_{\lambda\in\Lambda},$ which is a frame itself. Afterwards, one can obtain the reconstruction formula 
\[
f=\sum_{\lambda\in\Lambda}\langle f,\varphi_\lambda^{\mathrm{dual}} \rangle_{\Hil} \varphi_{\lambda}=\sum_{\lambda\in\Lambda}\langle f,\varphi_\lambda \rangle_{\Hil} \varphi_{\lambda}^{\mathrm{dual}},
\] which holds for every $f\in\Hil.$ 

Let $\Phi=(\varphi_\lambda)_{\lambda\in\Lambda}$ be a frame for a Hilbert space $\Hil$. In view of real-world applications, it is reasonable to assume that we cannot store all frame coefficients or all frame elements. Hence, for a given ${f=\sum_{\lambda\in\Lambda} c_\lambda\varphi_\lambda\in\Hil}$ we would like to approximate $f$ in an optimal way by only using linear combinations of $N$ frame elements. To make this formal, following \cite{Dev1998}, we define the \emph{space of nonlinear approximation} by
\begin{align*}
\Sigma_N:=\bigcup_{\# M\leq N}\text{span}\{\varphi_\lambda: \lambda\in M\}.
\end{align*}
Then
\[\sigma_{N,\Phi,\Hil}(f):= \inf_{f_N\in\Sigma_N}\left\|f-f_N \right\|_{\Hil} \] is called \emph{the error of best N-term approximation of $f$ with respect to $\Phi$.} There exists a very important connection between the error of best $N$-term approximation and \emph{weak} $\ell^p$-\emph{spaces} which are defined by
\begin{align*}
    \ell^{p}_w:=\{(c_j)_{j\in\mathbb{N}}: \sup_{n\in\mathbb{N}} n^{1/p}c_n^*<\infty\},
\end{align*} where $(c_n^*)_{n\in\mathbb{N}}$ is a rearrangement of $\mathbf{c}=(c_\lambda)_{\lambda\in\Lambda}\in\ell^2(\Lambda)$ such that $|c^*_{n+1}|\geq |c^*_{n}|$ for all $n\geq 1.$ It was shown in \cite{Dev1998} that for $\textbf{c} \in \ell^2$ we have that 
\[
\sigma_{N,\Phi,\Hil}(\mathbf{c})\lesssim N^{-s} \iff \textbf{c}\in\ell^{p}_w
\]
for $p=(\frac{1}{2}+s)^{-1}$, where $\Hil=\ell^2(\N)$ and the $N$-term approximation is taken with respect to the canonical basis of $\ell^2(\N)$. If $(\varphi_n)_{n\in \N}$ is a frame for some Hilbert space $\Hil$ one can still get 
\begin{align}\label{BestN-Term-weak}
 (\langle f,\varphi_n\rangle_\Hil )_{n\in \N}\in\ell^{p}_w \implies \sigma_{N,\Phi,\Hil}(f)\lesssim N^{-s},
\end{align}
where the $N$-term approximation is taken with respect to the canonical dual frame.
We add that $\ell^{p} \hookrightarrow\ell^{p}_w \hookrightarrow \ell^{p+\epsilon}$ for $0<p<2$ and $\epsilon>0$.

\subsection{Function spaces}
Since we aim to derive approximation properties of shearlet frames for \emph{Sobolev spaces} on $\R^2$ as well as on bounded domains, we briefly recall their definition.
\begin{definition}
Let $\Omega \subseteq \R^2$ be an open domain and $s\in\N.$ Then the \emph{Sobolev space $W^{s,2}(\Omega)$ of order $s$} is given by
\[
W^{s,2}(\Omega):= \{f\in L^2(\Omega): D^{\mathbf{a}}f\in L^{2}(\Omega)~\text{for all}~|\mathbf{a}|\leq s\}.
\] 
These spaces, equipped with the inner product $\langle f,g \rangle_{W^{s,2}(\Omega)}:= \sum_{|\mathbf{a}|\leq s} \langle D^{\mathbf{a}}f , D^{\mathbf{a}}g \rangle_{L^2(\Omega)},$ are Hilbert spaces.
\end{definition}

If $\Omega=\R^2,$ then we can characterize $W^{s,2}(\R^2)$ by using \emph{Bessel potential spaces}: 
\begin{definition}
Let $s\in\mathbb{N}.$ Then the \emph{Bessel potential space of order $s$} is given by 
\[H^{s}(\R^2):=\{f\in L^{2}(\R^2):\mathcal{F}^{-1}[(1+|\cdot|^2)^{\frac{s}{2}}\mathcal{F}f]\in L^{2}(\R^2)\}.\]
These spaces, equipped with the inner product $\langle f,g \rangle_{H^{s}(\R^2)}:= \langle (1+|\cdot|^2)^{\frac{s}{2}}\hat{f},(1+|\cdot|^2)^{\frac{s}{2}}\hat{g} \rangle_{L^2(\R^2)}$, are Hilbert spaces. 
\end{definition}

In fact, it holds that $\left\|\cdot \right\|_{W^{s,2}(\R^2)} \sim \left\|\cdot \right\|_{H^{s}(\R^2)}$, which is why in the coming sections we will sometimes work on $H^s(\R^2)$ instead of $W^{s,2}(\R^2).$ If $\Omega\subsetneq \R^2,$ we will always work with $W^{s,2}(\Omega),$ but in order to simplify notation we will write $H^{s}(\Omega):=W^{s,2}(\Omega).$  
We furthermore define ${H_0^1(\Omega):=\{u \in H^1(\Omega):u=0 \text{ on } \partial \Omega\}}$ and $H^{-1}(\Omega):=(H_0^1(\Omega))'.$ On top of that, by $B_{s,p}^t$ we denote the usual Besov spaces.

\subsection{Wavelet systems}

Let $\Omega \subset \R^2$ be a bounded open domain. In the sequel, we will work with wavelet systems on bounded domains. Later on we will assume a number of properties that the wavelet systems should have. For now we only stipulate that they should be indexed in a specific way. 
\begin{definition}
A set $\bdWaveS \subset \Lp{2}{(\Omega)}$ is called a \emph{boundary wavelet system} for some $J_0 \in \Z,$ if it can be written as
\begin{align*}
\bdWaveS = \{ \omega_{J_0, m, 0}: m \in K_{J_0}\} \cup \{ \omega_{j, m, \upsilon}: j\geq J_0, m \in K_{j}, \upsilon = {1,2,3}\},
\end{align*}
where $K_j\subset \Omega$, $\# K_j \sim 2^{2 j}$. We denote the index set by
$$
\Theta: =\{(J_0 ,m, 0): m\in K_{J_0}\} \cup \{(j,m, \upsilon): j\geq J_0, m\in K_j ,\upsilon = {1,2,3}\}.
$$
\end{definition}
Explicit constructions of boundary wavelet systems that yield biorthogonal bases for $L^2(\Omega)$ can be found, for example, in \cite{CohDauVial, DahS1998,DahKU1996stokes,CanTU1999WaveElMeth,bitt2006biortho,primbs2006stabile}.

\subsection{Shearlet systems on \texorpdfstring{$\R^2$}{R2}}\label{sec:shearletsonR^2}

Shearlet systems were introduced in \cite{GKL2006, GuoL2007} with the intent to improve on suboptimal approximation rates of wavelet systems for natural images.

The key towards faster approximation rates for functions displaying curvilinear singularities is to replace the isotropic scaling of wavelets by an anisotropic variant paired with an operation to adjust the orientation of the elements of the system. Towards such a construction, we define for $j,k \in \Z$ the \emph{parabolic scaling and shearing matrices} by
\begin{align*}
    A_j = \left(\begin{array}{ll}
         2^j& 0  \\
         0 & 2^{\frac{j}{2}}\\ 
    \end{array}\right), \text{ and } 
    S_k = \left(\begin{array}{ll}
         1& k  \\
         0 & 1\\ 
    \end{array}\right).
\end{align*}

Using the matrices above, we present the definition of a \emph{cone-adapted shearlet system}:

\begin{definition}[\cite{KLcmptShearSparse2011, GKL2006}]\label{def:ShearletSystem}
Let $\phi, \psi \in L^2(\R^2)$, $c= [c_1,c_2]^T \in \R^2$ with $c_1,c_2>0$. Then the \emph{cone-adapted shearlet
system} is defined by

\begin{equation*}
\mathcal{SH}(\phi, \psi, \tilde{\psi}, c) := \Phi(\phi, c_1) \cup \Psi(\psi, c) \cup \tilde{\Psi}(\tilde{\psi}, c),
\end{equation*}
where
\begin{align*}
\Phi(\phi, c_1) :=& \left \{ \psi_{0,0,m,0} = \phi(\cdot - c_1 m) :m\in \Z^2 \right\},\\
\Psi(\psi, c) :=& \left\{ \psi_{j,k,m,1} = 2^{\frac{3j}{4}}\psi(S_k A_{j}\cdot -  M_c m ): j\in \N_0, |k| \leq   2^{\left\lceil\frac{j}{2}\right\rceil}, m\in \Z^2 \right\},\\
\tilde{\Psi}(\tilde{\psi}, c) :=&\left\{ \psi_{j,k,m,-1} = 2^{\frac{3j}{4}}\tilde{\psi}(S_k^T \tilde{A}_{j}\cdot -  M_{\tilde{c}} m ): j\in \N_0, |k| \leq
2^{\left\lceil\frac{j}{2}\right\rceil}, m\in \Z^2 \right\},
\end{align*}

with $\tilde{\psi}(x_1,x_2) = \psi(x_2,x_1)$, $M_c:= \diag( c_1, c_2 )$, $M_{\tilde{c}} = \diag( c_2 , c_1 )$, $\tilde{A}_{j} = \diag(2^{j/2},2^{j})$.
\end{definition}

For cone-adapted shearlet systems we will employ the index set 
\begin{align*}
\Lambda := \{(j,k,m,\iota): |\iota| j \geq j\geq 0, |k|\leq |\iota| 2^{\frac{j}{2}}, m\in \Z^2, \iota \in \{1,0,-1\} \}.
\end{align*}
Then we can write the cone-adapted shearlet system as $(\psi_{j,k,m,\iota})_{(j,k,m,\iota)\in \Lambda}$.

It was shown in \cite{KGLConstrCmptShear2012, DahST2011coorbits} that compactly supported shearlets can be constructed to yield frames for $L^2(\R^2)$. A further sufficient condition was presented in \cite{LimShearlets}. An easily fulfillable condition, under which reweighted shearlet systems $(2^{-js}\psi_{j,k,m,\iota})_{(j,k,m,\iota)\in\Lambda}$ constitute frames for $H^s(\R^2),s\geq 0,$ was given in \cite{GroKMP2015shearbddom}.

For $\nu>0$ let $STAR^2(\nu)$ denote the set of all star-shaped subsets of $\R^2$ with $C^2$-boundary and curvature bounded by $\nu.$ Then we can define the set of all cartoon-like functions in the following way:

\begin{definition}
For $\nu>0$, let $\mathcal{E}^2(\nu)$ be the set of all functions $f\in L^2(\R^2),$ for which there exist some $B\in STAR^2(\nu)$ and $f_i\in C^2(\R^2)$ with compact support in $(0,1)^2$ as well as $\left\|f_i \right\|_{C^2(\R^2)}\leq 1$ for $i=1,2$ such that 
\[f=  f_1+ \mathcal{X}_B f_2.\]
We call $\mathcal{E}^2(\nu)$ the set of \emph{cartoon-like functions}.
\end{definition}

It was established in \cite{DCartoonLikeImages2001} that for an arbitrary dictionary $\Phi=(\varphi_i)_{i\in I}$ for $L^2(\R^2)$ the optimal achievable best $N$-term approximation rate for the class of cartoon-like functions is given by 
$$
\sigma_{N,\Phi,L^2(\R^2)}(f) = O(N^{-1}),
$$
provided that only polynomial depth search is used to compute the approximation. Let us now describe the approximation rate which shearlets achieve.
First of all, for a shearlet system $(\psi_{j,k,m,\iota})_{(j,k,m,\iota)\in \Lambda}$ and $f\in L^2(\R^2)$ 
we denote by $(c_n^{*}(f))_{n\in \N}$ the non-increasing rearrangement of $(|\langle f, \psi_{j,k,m,\iota}\rangle_{L^2(\R^2)} |^2)_{(j,k,m,\iota) \in \Lambda}$. 
Under mild assumptions on the generator functions $\phi$ and $\psi$ it was shown in \cite{GuoL2007, KLcmptShearSparse2011} that
\begin{align}\label{eq:optApprox123}
   \sum_{n\geq N} c_n^*(f) \lesssim N^{-2}\log(N)^3.
\end{align}
If $(\psi_{j,k,m,\iota})_{(j,k,m,\iota)\in \Lambda}$ constitutes a frame, then \eqref{eq:optApprox123} yields that the best $N$-term approximation rate with respect to any dual frame of the shearlet system $\Phi^{\mathrm{dual}}$ obeys
\begin{align*}
   \sigma_{N,\Phi^{\mathrm{dual}},L^2(\R^2)}(f) = O(N^{-1}\log(N)^\frac{3}{2}) \text{ for all } f \in \mathcal{E}^2(\nu),
\end{align*}
and, for the sequence $(c_n^*(f))_{n\in\N}$, we have $c_n^*(f)\lesssim n^{-3}\log(n)^{3}.$ One aim of this paper is the generalization of these results to functions in $H^s(\R^2)$ whose higher-order derivatives are cartoon-like.

\subsection{Hybrid shearlet-wavelet systems on bounded domains}

In \cite{GroKMP2015shearbddom}, a hybrid shearlet-wavelet system on a bounded domain $\Omega\subset \R^2$ was constructed, resulting from a combination of parts of a wavelet frame on a bounded domain and parts of a shearlet system on $\R^2$. In order to still obtain good approximation properties it is essential not to include all elements of the wavelet system in the hybrid system. In particular, the elements which will be included should be located only in a thin strip $\Gamma_{\gamma(j)}$ close to the boundary of $\Omega,$ where for $r>0$ and some fixed $q_{\mathrm{sh}}>0$, which will be specified later, 
\[
\Gamma_r= \{x \in \Omega : d(x,\partial \Omega) < q_{\mathrm{sh}} 2^{-r}\},
\] 
$\gamma(j)$ depends on the scale $j$ of the wavelets, and $\Gamma_{\gamma(j)}$ shrinks for increasing scales.  Before we continue with the defintion of a hybrid shearlet-wavelet system and the analysis of the frame property, we need to introduce the following assumptions imposed on the underlying wavelet and shearlet systems.

\begin{assumption}\label{Ass:ShWvlet} \cite{GroKMP2015shearbddom} Let $s\in\N_0$, $\mathcal{W}$ be a boundary wavelet system and $\mathcal{SH}(\phi,\psi,\psitilde,c)$ be a shearlet system. Then, we assume the following properties of the boundary wavelet system:
\begin{itemize}

\item[(W1)] $(2^{-js}\omega_{j,m,\upsilon})_{(j,m,\upsilon) \in \Theta}$ is a frame for $H^s(\Omega)$ and there exists a dual frame \\
$\mathcal{W}^{\mathrm{dual}}_{H^s}=(2^{-js}\omega_{j,m,\upsilon})^{\mathrm{dual}}_{(j,m,\upsilon) \in \Theta}$ and for all $(j,m,\upsilon)\in \Theta$ with ${\partial \Omega \cap \suppp (2^{-js}\omega_{j,m, \upsilon})^{\mathrm{dual}} = \emptyset}$ it holds that 
\begin{align}\label{eq:RegWave1}
|\widehat{(2^{-js}\omega_{j,m, \upsilon}})^{\mathrm{dual}}(\xi)| \lesssim 2^{-js}\cdot 2^{-j}\frac{\min\{1,|2^{-j}\xi_i|^{\alpha_{\mathrm{w}}}\}}{\max\{1,|2^{-j}\xi_1|^{\beta_{\mathrm{w}}}\}\max\{1,|2^{-j}\xi_2|^{\beta_{\mathrm{w}}}\}}, 
\end{align}
for at least one $i\in \{1,2\},$ some $\alpha_{\mathrm{w}},\beta_{\mathrm{w}}>0$ and all $\xi \in \R^2$.
Here the Fourier transform is to be understood on $\Lp{2}{(\R^2)}$ after extension by $0$. Furthermore, we assume that the elements of $\bdWaveS^{\mathrm{dual}}_{H^s}$ have compact support and let 
$$
q_{\mathrm{w}}^{(0)} := \inf \left\{ q>0 : \suppp (2^{-js}\omega_{j,m, \upsilon})^{\mathrm{dual}}\subset B_{2^{-j}q}(m) \fa (j,m,\upsilon) \in \Theta\right\}>0.$$

\item[(W2)] The elements of $\bdWaveS$ have compact support and 
$$
q_{\mathrm{w}}^{(1)} := \inf \left\{q>0: \suppp \omega_{j,m, \upsilon}\subset B_{2^{-j}q}(m) \fa (j,m,\upsilon) \in \Theta\right\} >0.
$$
Moreover, we have that 
$$
|m - m'| \geq 2^{-j}q_{\mathrm{w}}^{(2)}\fa j\geq J_0 \text{ and }m,m' \in K_j,m \neq m',
$$
for some $q_{\mathrm{w}}^{(2)}>0.$
\end{itemize}
Furthermore, we assume the following properties of $\mathcal{SH}(\phi,\psi,\psitilde,c)$:
\begin{itemize}

\item[(S1)] $(2^{-js}\psi_{j,k,m,\iota})_{(j,k,m,\iota) \in \Lambda}$ is a frame for $H^s(\R^2)$ with dual frame $(2^{-js}\psi_{j,k,m,\iota})^{\mathrm{dual}}_{(j,k,m,\iota) \in \Lambda}$.

\item[(S2)] For some $C_1,C_2 >0$ the decay conditions
\begin{align}\label{eq:RegShear1}
|\widehat{\psi}(\xi_1,\xi_2)| \leq C_1 \frac{\min\{1,|\xi_1|^{\alpha_{\mathrm{sh}}}\}}{\max\{1,|\xi_1|^{\beta_{\mathrm{sh}}}\}\max\{1,|\xi_2|^{\beta_{\mathrm{sh}}}\}}
\end{align}
and
\begin{align}\label{eq:RegShear2}
|\widehat{\psitilde}(\xi_1,\xi_2)| \leq C_2 \frac{\min\{1,|\xi_2|^{\alpha_{\mathrm{sh}}}\}}{\max\{1,|\xi_1|^{\beta_{\mathrm{sh}}}\}\max\{1,|\xi_2|^{\beta_{\mathrm{sh}}}\}}
\end{align}
are obeyed for all $(\xi_1, \xi_2) \in \R^2$ and some $\alpha_{\mathrm{sh}},\beta_{\mathrm{sh}}>0$.

\item[(S3)] For all $(j,k,0,\iota)\in \Lambda$ and some $q_{\mathrm{sh}}>0$ we have that
$$
\suppp \psi_{j,k,0,\iota} \subset B_{2^{-j/2}q_{\mathrm{sh}}/2}(0).
$$
\end{itemize}
\end{assumption}
Essentially, Assumption \ref{Ass:ShWvlet} requires a wavelet frame and a shearlet frame for $H^s(\Omega)$, $H^s(\R^2)$ respectively. Moreover, the shearlet and wavelet frames are required to be built from compactly supported elements. For the wavelet system it is additionally required that the associated dual frame is compactly supported as well. For wavelet systems forming orthonormal bases this property is automatically satisfied. Finally, three regularity assumptions are made in \eqref{eq:RegWave1} \eqref{eq:RegShear1}, and \eqref{eq:RegShear2}. In combination with the assumptions on the supports, these regularity assumptions are necessary to control the correlation between wavelets elements and shearlets elements in the following definition of a hybrid shearlet-wavelet system. The construction of the hybrid shearlet-wavelet system functions by including wavelet elements supported close to the boundary of the domain and shearlet elements that are supported inside the domain. 

\begin{definition}\label{def:variableWidthShearletSystem}
Let $s\in \N_0$, $\mathcal{SH}(\phi,\psi,\widetilde{\psi},c) = (\psi_{j,k,m,\iota})_{(j,k,m,\iota) \in \Lambda}$ be a shearlet system fulfilling (S3), let $\tau>0$ and $t > 0$. Further, let $\mathcal{W}$ be a boundary
wavelet system fulfilling (W1) and (W2) and set
\begin{align*}
\mathcal{W}_{t,\tau}:= \left\{\omega_{j,m,\upsilon} \in \mathcal{W}: (j,m,\upsilon) \in \Theta_{t,\tau} \right\},
\end{align*}
where
\begin{align*}
    \Theta_{t,\tau} := \left\{(j,m,\upsilon) \in \Theta: B_{2^{-j}(q_{\mathrm{w}}^{(0)} + q_{\mathrm{w}}^{(1)})}(m) \cap \Gamma_{\tau(j-t)} \neq \emptyset\right\}.
\end{align*}
In addition, let
\[
    \Lambda_0: = \left\{ (j,k,m,\iota)\in \Lambda :  \suppp \psi_{j,k,m,\iota} \subset \Omega\right\}.
\]
Then, the \emph{hybrid shearlet-wavelet system with offsets $t$ and $\tau$}
is defined as
\begin{align*}\mathcal{HSW}_{t,\tau}^s(\Wcal; \phi, \psi, \psitilde, c):= \left\{ \psi_{j,k,m,\iota}:  (j,k,m,\iota)\in \Lambda_0\right\} \cup \mathcal{W}_{t,\tau}.
\end{align*}
\end{definition}

The parameters $\tau$ and $t$ in Definition \ref{def:variableWidthShearletSystem} control 
the size of the boundary strip $\Gamma_{\tau(j-t)}$ and thereby determine which wavelets from the underlying wavelet frame are included in the hybrid shearlet-wavelet system. Intuitively a large $\tau$ and a small $t$ imply an asymptotically quickly shrinking boundary strip for $j \to \infty$. On the other hand, decreasing $\tau$ or increasing $t$ will increase the width of the strip. 

The frame property for $H^s(\Omega)$ of a hybrid shearlet-wavelet system on \emph{minimally smooth} domains $\Omega \subset \R^2$ (see \cite{Stein1970SingIng}) has been analyzed in \cite{GroKMP2015shearbddom}:

\begin{theorem}[\cite{GroKMP2015shearbddom}] \label{thm:FrameProperty}
Let $\mathcal{SH}(\phi, \psi, \psitilde, c)$ and $\mathcal{W}$ satisfy (S1), (S2), (S3) and (W1), (W2), respectively, with $s\in\N_0,$ $\alpha_{\mathrm{w}} > 1$, $\alpha_{\mathrm{sh}} >0,$ $\beta_{\mathrm{w}} \geq s,$ $\beta_{\mathrm{sh}}>2+2s+2\alpha_{\mathrm{w}}$, $\tau > 0$ and $\epsilon >0$ such that ${((1-\epsilon)/\tau - 2)\alpha_{\mathrm{w}} >\frac{5}{2}}$.
Then there exists some $T>0$ such
that, for any $t\geq T$, then we have that the hybrid shearlet-wavelet system ${(\varphi_n)_{n\in \N} = \mathcal{HSW}_{t,\tau}^s(\bdWaveS; \phi, \psi, \psitilde, c)}$ satisfies
\begin{align*}
\|f\|_{H^s(\Omega)}^2  \sim \sum_{n\in \N} \left|\left \langle f, 2^{-j_n s}\varphi_{n} \right \rangle_{H^s(\Omega)} \right|^2 \fa f\in H^s(\Omega). 
\end{align*}
\end{theorem}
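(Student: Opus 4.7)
The plan is to verify the upper and lower frame bounds separately, i.e., show both inequalities in the equivalence. The upper bound is the easier direction, reducing to the known frame properties for the wavelet part on $\Omega$ and the shearlet part on $\R^2$. The lower bound is the substantive part: the challenge is that our system only keeps the shearlets whose support sits strictly inside $\Omega$ together with a thin layer of boundary wavelets, so we must show that the missing shearlets (those crossing $\partial\Omega$) are effectively "covered" by the selected wavelets, and vice versa for the missing wavelets being covered by interior shearlets.

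For the upper bound I would argue as follows. The wavelet part is a subset of the frame $(2^{-js}\omega_{j,m,\upsilon})_{(j,m,\upsilon)\in\Theta}$, so by (W1) it already satisfies $\sum_{(j,m,\upsilon)\in\Theta_{t,\tau}}|\langle f,2^{-js}\omega_{j,m,\upsilon}\rangle_{H^s(\Omega)}|^2 \lesssim \|f\|_{H^s(\Omega)}^2$. For the shearlet part, since $\Omega$ is minimally smooth, Stein's extension theorem yields a bounded operator $E\colon H^s(\Omega)\to H^s(\R^2)$ with $\|Ef\|_{H^s(\R^2)}\lesssim \|f\|_{H^s(\Omega)}$. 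For any $(j,k,m,\iota)\in\Lambda_0$ the support of $\psi_{j,k,m,\iota}$ lies in $\Omega$, so the $H^s(\Omega)$ and $W^{s,2}(\R^2)$ pairings of $f$ and $Ef$ with $\psi_{j,k,m,\iota}$ coincide; using the norm equivalence $\|\cdot\|_{W^{s,2}(\R^2)}\sim\|\cdot\|_{H^s(\R^2)}$ and (S1), one bounds the shearlet coefficient sum by $\|Ef\|_{H^s(\R^2)}^2\lesssim \|f\|_{H^s(\Omega)}^2$.

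For the lower bound, the plan is: apply (W1) to get $\|f\|_{H^s(\Omega)}^2 \lesssim \sum_{\Theta}|\langle f,2^{-js}\omega_{j,m,\upsilon}\rangle_{H^s(\Omega)}|^2$, split the index set into $\Theta_{t,\tau}$ (retained) and $\Theta\setminus\Theta_{t,\tau}$ (discarded). The retained part directly contributes to the hybrid frame bound, so it remains to control the discarded part by the shearlet coefficients. For $(j,m,\upsilon)\in\Theta\setminus\Theta_{t,\tau}$, the dual $(2^{-js}\omega_{j,m,\upsilon})^{\mathrm{dual}}$ has support disjoint from $\partial\Omega$ and indeed sits at distance at least $q_{\mathrm{sh}}2^{-\tau(j-t)}-2^{-j}(q_{\mathrm{w}}^{(0)}+q_{\mathrm{w}}^{(1)})$ from the boundary. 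Expand $f$ with respect to the shearlet frame (after extension), isolate the terms indexed by $\Lambda_0$, and use the biorthogonality-type relation to write $\langle f,2^{-js}\omega_{j,m,\upsilon}\rangle_{H^s(\Omega)}$ as a sum against shearlet coefficients, with weights given by the cross-Gramian $\langle 2^{-j's}\psi_{j',k',m',\iota'},(2^{-js}\omega_{j,m,\upsilon})^{\mathrm{dual}}\rangle_{H^s}$. A Schur-test argument then converts a uniform bound on row and column sums of the cross-Gramian into the desired $\ell^2$-bound.

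The hard part is establishing the cross-correlation estimates and verifying the Schur summability. Computing $|\langle \psi_{j',k',m',\iota'},(2^{-js}\omega_{j,m,\upsilon})^{\mathrm{dual}}\rangle_{H^s}|$ in Fourier space via the decay assumptions \eqref{eq:RegWave1}, \eqref{eq:RegShear1}, \eqref{eq:RegShear2}, together with the support separation distance $\sim 2^{-\tau(j-t)}$ when a shearlet crosses the boundary, forces a tradeoff between the decay exponents $\alpha_{\mathrm{w}},\beta_{\mathrm{w}},\alpha_{\mathrm{sh}},\beta_{\mathrm{sh}}$, the Sobolev weight $2^{-js}$, the anisotropy of parabolic scaling, and the strip-shrinkage rate $\tau$. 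The hypotheses $\beta_{\mathrm{sh}}>2+2s+2\alpha_{\mathrm{w}}$ and $((1-\epsilon)/\tau-2)\alpha_{\mathrm{w}}>5/2$ are precisely what is needed to make the resulting Schur sums finite with a prefactor $\lesssim 2^{-cT}$ for some $c>0$; choosing $T$ large enough absorbs that factor into the wavelet-frame lower bound constant, closing the argument.
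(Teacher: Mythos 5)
First, a structural remark: the paper itself does not prove Theorem \ref{thm:FrameProperty}; it is imported from \cite{GroKMP2015shearbddom}, so your sketch must be measured against the argument in that reference. Your overall architecture is of the right shape and close in spirit to it: upper bound via the Bessel property of each subsystem (Stein extension plus locality of the $W^{s,2}$-pairing for the interior shearlets), lower bound by starting from the wavelet frame bound over all of $\Theta$, retaining $\Theta_{t,\tau}$, and controlling the discarded interior wavelets through the interior shearlets up to an error with a factor decaying in $t$, absorbed for $t\geq T$. You also correctly identify that the only cross-Gramian the hypotheses let you estimate is primal shearlets (decay (S2)) against \emph{interior dual} wavelets (decay (W1)), with spatial separation of order $2^{-\tau(j-t)}$.

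The genuine gap is in the mechanism you propose for that key step: it does not produce the Gramian you name. Expanding the extension $Ef$ in the shearlet frame gives either $Ef=\sum_\lambda\langle Ef,2^{-js}\psi_\lambda\rangle(2^{-js}\psi_\lambda)^{\mathrm{dual}}$, so pairing with the primal wavelet $\omega_\eta$ produces entries $\langle(2^{-js}\psi_\lambda)^{\mathrm{dual}},2^{-js}\omega_\eta\rangle$, or the expansion with primal-shearlet synthesis, whose coefficients are taken against dual shearlets and hence are not hybrid coefficients, with Gramian $\langle 2^{-js}\psi_\lambda,2^{-js}\omega_\eta\rangle$. Either way you need support/Fourier-decay information on the canonical dual shearlets or on the primal wavelets, none of which is assumed ((S1) only asserts existence of a dual frame, (W2) only compact support), and the quantity you actually invoke, $\langle 2^{-j's}\psi_{\lambda},(2^{-js}\omega_\eta)^{\mathrm{dual}}\rangle$, never arises from that expansion. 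The repair (and essentially the route of the cited proof) is to run the reduction through the \emph{wavelet} reconstruction formula: set $f_2:=\sum_{\eta\in\Theta\setminus\Theta_{t,\tau}}\langle f,2^{-js}\omega_\eta\rangle_{H^s(\Omega)}(2^{-js}\omega_\eta)^{\mathrm{dual}}$ and $f_1:=f-f_2$; since the discarded dual wavelets stay at distance $\gtrsim 2^{-\tau(j-t)}$ from $\partial\Omega$, the zero-extension of $f_2$ lies in $H^s(\R^2)$ and the shearlet frame lower bound applies to it. For $\lambda\in\Lambda_0$ the resulting coefficients split as $\langle f,2^{-js}\psi_\lambda\rangle-\langle f_1,2^{-js}\psi_\lambda\rangle$, the first a hybrid coefficient and the second controlled by the Bessel bounds already established; only for $\lambda\notin\Lambda_0$ (shearlets meeting the boundary region) does the estimable cross-Gramian $\langle 2^{-js}\psi_\lambda,(2^{-js}\omega_\eta)^{\mathrm{dual}}\rangle$ with separation $2^{-\tau(j-t)}$ appear, and this is where the exponent conditions and the choice of $T$ enter, exactly as you anticipated.
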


As already outlined in Subsection \ref{sec:shearletsonR^2}, the second core property of shearlets, besides the frame property, is the optimal approximation rate of shearlets for cartoon-like functions, which are defined on $\Omega$ in the following way:

\begin{definition}\label{def:CartoonOnOmega}
Let $\nu>0$, and $\Omega \subset \R^2$ be a domain, $B \in STAR^2(\nu)$, and $f_i \in C^2(\R^2)$, $\|f_i\|_{C^2(\R^2)}<\infty$ for $i= 1,2$.
If $\#(\partial B \cap \partial \Omega) \leq M$ for some $M\in \N$ and $\partial \Omega$ and $\partial B$ only intersect transversely, then $\mathbf{P}_{\Omega}(f_1 + \chi_B f_2)$ is a \emph{cartoon-like function on $\Omega$}. We denote the \emph{set of cartoon-like functions on $\Omega$} by $\mathcal{E}^2(\nu, \Omega)$.
\end{definition}

It was shown in \cite{GroKMP2015shearbddom, DissPP} that for all functions $f \in H^l(\Omega)$ such that for all $|\mathbf{a}| = l$, $D^{\mathbf{a}}f \in \mathcal{E}^2(\nu, \Omega)$ one obtains
\begin{align}\label{eq:ApproxRateab}
\sigma_{N,\Phi^{\mathrm{dual}},L^2(\Omega)}(f)= O(N^{-\frac{l+2}{2}}\log(N)^\frac{3}{2}),
\end{align}
with respect to any dual frame of the hybrid shearlet-wavelet system and the $L^2(\Omega)$-norm.
For $l = 0$ and up to the $\log$ factor, this is the optimal approximation rate.

In Section \ref{sec:ApproxRates}, we will prove a similar approximation rate to \eqref{eq:ApproxRateab}. There we measure the best $N$-term approximation error with respect to the hybrid shearlet-wavelet system and the $H^l(\Omega)$-norm instead of the $L^2(\Omega)$-norm.

\section{Approximation rates}\label{sec:ApproxRates}

The two essential properties of shearlet systems on $L^2(\R^2)$ and $L^2(\Omega)$, for a domain $\Omega \subset \R^2$, are the frame property as well as the property to obtain optimally sparse approximations of functions that exhibit anisotropic structures, in our case cartoon-like functions. We recalled these properties in Subsection \ref{sec:shearletsonR^2}. As we have seen already, reweighted shearlet systems will constitute frames for the Sobolev spaces $H^s(\R^2)$ or $H^s(\Omega)$, for $s\in \N_0$.

Certainly, cartoon-like functions with jump singularities do not belong to any Sobolev space $H^s(\Omega)$ for $s\geq\frac{1}{2}$. Hence, we need to introduce a suitable and relevant generalization of the cartoon-model to Sobolev spaces. Given that we are interested in capturing curve-like singularities we will analyze \emph{functions with cartoon-like derivatives}, i.e., functions $f$ such that for all multi-indices $|\mathbf{b}|\leq s$ we have that $D^\mathbf{b}f \in \mathcal{E}^2(\nu, \Omega)$. Of course this definition includes functions with smooth derivatives.

In the upcoming Subsection \ref{eq:regOfSols} we will demonstrate that this class of functions with cartoon-like derivatives does appear in practice, particularly as solutions of elliptic PDEs.

Thereafter we will establish approximation rates admitted by reweighted shearlet systems for these functions in various Sobolev norms. We will focus on the analysis of the case of bounded domains since this is more involved. Nonetheless, all approximation results hold for $H^s(\R^2)$ with minor changes.
Similarly to the approximation rates on $L^2(\R^2)$ we will observe that for this function class our shearlet system admits considerably faster approximation rates than wavelets in Subsection \ref{sec:comp2Wave}.

\subsection{Regularity of solutions of elliptic PDEs}\label{eq:regOfSols}

In this subsection we will recall some literature describing the regularity of the derivatives of the solution of an elliptic PDE. We will ultimately see that the solution of an elliptic PDE with cartoon-like right-hand side will have cartoon-like first- or higher-order derivatives.

We consider PDEs of the type:
\begin{align}\label{eq:ellipticeq}
      Lu=-\sum_{1\leq i,j\leq2}D_i(a_{ij} D_ju)+\sum_{1\leq i\leq2} b_i D_iu+c u=f,
\end{align}
on a bounded domain $\Omega\subset \R^2,$ where $a_{ij},b_i,c$ are bounded and measurable and $L$ fulfills the \emph{uniform ellipticity condition} 
\begin{align} \label{eq:ellcond}
\lambda |x|^2\leq \sum_{1\leq i,j \leq 2} a_{ij}(x)x_ix_j \fa x\in \Omega \text{ and some $\lambda>0.$}      
\end{align} 
In particular, the special case $L=-\Delta$ is included in the setting above. Furthermore, we are interested in the case that $f$ is a cartoon-like function on $\Omega$ or $f = D_1 g_1 + D_2 g_2\in H^{-1}(\Omega)$, where $g_1,g_2$ are cartoon-like functions. 
To have any chance of good approximation rates by shearlets, we need the solution $u$ to be smooth away from curvilinear singularities. A first naive approach in this direction would be to examine standard Sobolev estimates in order to analyze the global regularity of the solution $u$. A standard elliptic result, see e.g. \cite{DobroElliptic} yields that for the homogeneous Dirichlet problem (i.e., \eqref{eq:ellipticeq} on $\Omega$ and $u=0$ on $\partial \Omega$), provided that $f\in H^m(\Omega),\partial \Omega\in C^{m+2}$ and $a_{ij},b_i,c$ are sufficiently smooth, the solution $u\in H^{m+2}(\Omega)$. However, due to the jump curve of the right-hand side $f$, in general we have that $f\notin H^m(\Omega)$ for any $m\geq 1$. Hence, we can only expect $u\in H^2(\Omega).$
To improve upon such estimates it is clear, that we need local estimates to show that the solution $u$ is piecewise smooth.
As a first approach in this direction, in the case where $f$ is cartoon-like, we can employ classical domain decomposition and Schauder estimates, see \cite{GilTruElippticPDE2001}, to observe that $(D^{\alpha}u) \in C^{2}(B) \cup C^{2}(\Omega \setminus \bar{B})$. It is, however, still possible that $D^\alpha u$ explodes near $\partial B$. This would render an analysis of shearlet approximation rates more complicated.
If in addition $(D^{\alpha}u)_{|B} \in C^{2}(\bar{B})$ and $(D^{\alpha}u)_{|B^c} \in C^{2}(\bar{B^c})$, then it would immediately follow by the well-known Whitney extension theorem \cite{Whitney1992} that $D^{\alpha}u$ is cartoon-like on $\Omega$. In this scenario the approximation rates that will be established in the following subsection can be used.

Such an analysis of the behavior of $D^\alpha u$ close to the discontinuity of $f$ can be found in \cite{LiNirenberg2003compositematerial}. We briefly describe their setup. Let us now consider

\begin{align*}
    Lu=-\sum_{1\leq i, j \leq2} D_i(a_{ij}D_j u ) = f \quad \text{ with } f= \sum_{i} h_i  + D_i g_i,
\end{align*}
where $a$ fulfills the uniform ellipticity condition \eqref{eq:ellcond}.
For the special case that $h_1$ is cartoon like, $h_2, g_1, g_2= 0$ and $a = \begin{pmatrix} 1 & 0\\ 0 &1 \end{pmatrix}$ we have a standard Poisson equation with cartoon-like right hand side. 

Under the assumptions that we can decompose $\Omega$ as $\Omega = \bigcup_{m = 1}^N \Omega_m$, where $\Omega_m \subset \Omega,\partial \Omega_m \in C^\infty$ and we further assume that ${a_{i,j}}_{|\Omega_m}, {h_i}_{|\Omega_m}, {g_i}_{|\Omega_m} \in C^\infty(\bar{\Omega}_m)$ for all $m\leq N,$ we have that $u \in C^{\infty}(\bar{\Omega}_m\cap \Omega)$ \cite[Proposition 1.4]{LiNirenberg2003compositematerial}.

This result implies, that if we stipulate some additional conditions on the boundary behavior of $u$, we can conclude, that if $f$ is cartoon-like and $u$ is a solution of 
\begin{align}\label{eq:PDE12345}
    L u = f,
\end{align}
then the second derivatives of $u$ are cartoon-like. Furthermore, using the same argument, if \\ ${f = D_1 g_1 + D_2 g_2 \in H^{-1}(\Omega)}$, with $g_1,g_2$ being cartoon-like, then the solution $u$ of \eqref{eq:PDE12345} has derivatives, which are cartoon-like. 

\subsection{Approximation rates for functions with cartoon-like derivatives}

We provide approximation rates for two cases. First of all, we examine the approximation rates of reweighted hybrid shearlet-wavelet systems in the $H^s(\Omega)$-norm of functions whose $s$-th order derivatives are cartoon-like. Such functions appear as solutions of elliptic differential equations if the right hand side is a derivative of a cartoon-like function, as described in Subsection \ref{eq:regOfSols}. From now on, let $\Omega\subset \R^2$ be a bounded domain, whose boundary has finite length.

\begin{theorem}\label{thm:optApproxHs}
Let $s\in \N$, $\nu >0,$ $c\in \R^2$ and $\phi, \psi, \psitilde \in L^2(\R^2)$. For $|\mathbf{a}|\leq s,$ let 
\[
\left\{(D^{\mathbf{a}}\psi)_{j,k,m,\iota}:(j,k,m,\iota)\in \Lambda\right\}:= \mathcal{SH}\left(D^{\mathbf{a}}\phi, D^{\mathbf{a}}\psi, D^{\mathbf{a}}\psitilde,c\right)
\]
and let $(\theta_n^\mathbf{a}(\mathfrak{f}))_{n\in \N}$ be a non-increasing rearrangement of 
$(|\langle \mathfrak{f}, (D^{\mathbf{a}} \psi)_{j,k,m,\iota}\rangle|_{L^2(\Omega)})_{(j,k,m,\iota) \in \Lambda}$.
Further assume that for all $\mathfrak{f}\in \mathcal{E}^2(\nu, \Omega)$ we have that
\begin{align}\label{eq:assumptionApprox}
\theta_N^\mathbf{a}(\mathfrak{f}) \lesssim N^{-\frac{3}{2}}\log^{\frac{3}{2}} N, \text{ for all } N \in \N,
\end{align}
for all $|\mathbf{a}|\leq s$.
Furthermore, let $(\omega_{j,m,\upsilon})_{(j,m,\upsilon) \in \Theta}$ be a wavelet system on $\Omega$ such that for all $(j, m, \upsilon) \in \Theta$:
\begin{itemize}
    \item $|\suppp \omega_{j, m, \upsilon}| \lesssim 2^{-2j}$,
    \item $\|D^{\mathbf{a}} \omega_{j, m,\upsilon}\|_\infty \lesssim 2^{(|{\mathbf{a}}| + 1)j}$, 
    \item $\omega_{j, m, \upsilon}$ has two vanishing moments for all $\upsilon \neq 0$.
\end{itemize} 
On top of that, let $t\in \N$, $\tau >1/3$ and $(\varphi_n)_{n\in \N}:= \mathcal{HSW}^s_{t,\tau}(\Wcal;\phi, \psi, \psitilde, c)$. Then, for all $u$ such that $D^{\mathbf{a}} u \in \mathcal{E}^2(\nu, \Omega)$ for all $|{\mathbf{a}}| = s,$ we have
$$
\Xi_N(u) \lesssim N^{-\frac{3}{2}}\log^{\frac{3}{2}} N \text{ for all } N \in \N,
$$
where $(\Xi_m(u))_{m\in \N}$ is a non-increasing rearrangement of 
$ (|\langle u, 2^{-j_n s }\varphi_n \rangle_{H^s(\Omega)}|)_{n\in \N}$.

\end{theorem}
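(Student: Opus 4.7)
The plan is to expand the Sobolev inner product as
\[
\langle u, 2^{-js}\varphi_n\rangle_{H^s(\Omega)} = 2^{-js}\sum_{|\mathbf{a}|\leq s}\langle D^{\mathbf{a}} u, D^{\mathbf{a}}\varphi_n\rangle_{L^2(\Omega)},
\]
to split the hybrid system into its shearlet part $(\varphi_n=\psi_{j,k,m,\iota})$ and its wavelet part $(\varphi_n=\omega_{j,m,\upsilon})$, and to bound the non-increasing rearrangements of each coefficient family separately. The combined rearrangement inherits the common bound via $c_n^{*}\leq a_{\lceil n/2\rceil}^{*}+b_{\lceil n/2\rceil}^{*}$, so the splitting is lossless for establishing a rate of the form $n^{-3/2}\log^{3/2}n$.

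For the shearlet part, the chain rule applied to $\psi_{j,k,m,\iota}(x)=2^{3j/4}\psi(S_kA_jx-M_cm)$ yields the identity
\[
D^{\mathbf{a}}\psi_{j,k,m,\iota}=\sum_{|\mathbf{b}|=|\mathbf{a}|} C_{\mathbf{a},\mathbf{b}}(j,k)\,(D^{\mathbf{b}}\psi)_{j,k,m,\iota},
\]
where each coefficient $C_{\mathbf{a},\mathbf{b}}(j,k)$ is a monomial in $k$ and powers of $2^{j/2}$ satisfying $|C_{\mathbf{a},\mathbf{b}}(j,k)|\lesssim 2^{j|\mathbf{a}|}$ once the cone constraint $|k|\leq 2^{j/2}$ is invoked. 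Hence $|2^{-js}C_{\mathbf{a},\mathbf{b}}(j,k)|\lesssim 2^{-(s-|\mathbf{a}|)j}\leq 1$, and each shearlet coefficient becomes a bounded linear combination of $L^2$-inner products of $D^{\mathbf{a}}u$ with the derived shearlet systems $\mathcal{SH}(D^{\mathbf{b}}\phi,D^{\mathbf{b}}\psi,D^{\mathbf{b}}\psitilde,c)$. For $|\mathbf{a}|=s$ the hypothesis $D^{\mathbf{a}}u\in\mathcal{E}^2(\nu,\Omega)$ together with \eqref{eq:assumptionApprox} delivers $\theta^{\mathbf{b}}_N(D^{\mathbf{a}}u)\lesssim N^{-3/2}\log^{3/2}N$; for $|\mathbf{a}|<s$ the additional damping $2^{-(s-|\mathbf{a}|)j}$ makes the contribution strictly faster, since lower-order derivatives of $u$ are at least as regular and bounded on $\Omega$.

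For the wavelet part, two complementary size estimates are used. The crude one, $\|D^{\mathbf{a}}\omega_{j,m,\upsilon}\|_{L^1(\Omega)}\lesssim 2^{(|\mathbf{a}|-1)j}$ combined with $\|D^{\mathbf{a}}u\|_\infty\lesssim 1$, gives $|\langle u, 2^{-js}\omega_{j,m,\upsilon}\rangle_{H^s(\Omega)}|\lesssim 2^{-j}$. For wavelets whose support is disjoint from the singularity curve $\partial B$ of the cartoon parts of $D^{\mathbf{a}}u$, the sharper bound $\lesssim 2^{-3j}$ follows by Taylor expanding $D^{\mathbf{a}}u$ to linear order (remainder $\lesssim 2^{-2j}$) on $\suppp\omega_{j,m,\upsilon}$ and using the identity $\int P\cdot D^{\mathbf{a}}\omega=(-1)^{|\mathbf{a}|}\int D^{\mathbf{a}}P\cdot\omega=0$ for $P$ of degree $\leq 1$, which is a consequence of the two vanishing moments of $\omega_{j,m,\upsilon}$. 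Counting the wavelets in the strip $\Gamma_{\tau(j-t)}$, there are $\lesssim 2^{j(2-\tau)}$ at scale $j$, at most $\lesssim 2^{j(1-\tau)}$ of which meet $\partial B$ near the finitely many transversal crossings $\partial B\cap\partial\Omega$. Sorting, the bad coefficients produce a rearrangement $\lesssim n^{-1/(1-\tau)}$ which beats $n^{-3/2}$ precisely when $\tau>1/3$, while the good coefficients yield the faster rate $n^{-3/(2-\tau)}$.

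The main obstacles are the chain-rule bookkeeping — verifying that the anisotropic derivative factor $2^{j|\mathbf{a}|}$ is exactly absorbed by the $H^s$-renormalisation weight $2^{-js}$, so that the shearlet approximation hypothesis transfers cleanly — and the strip-counting on the wavelet side, where the threshold $\tau>1/3$ arises naturally from $1/(1-\tau)\geq 3/2$. Boundary-adjacent wavelets whose supports meet $\partial\Omega$ require additional care to handle the boundary contributions from integration by parts; these can be controlled either through the boundary-adapted construction of the underlying wavelet system (which in standard constructions preserves polynomial reproduction up to $\partial\Omega$) or by counting them as bad terms, whose number is still controlled by the strip geometry. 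Lower-order multi-indices and the finitely many scaling-function wavelets at level $J_0$ contribute only lower-order corrections.
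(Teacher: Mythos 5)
Your proposal follows essentially the same route as the paper's proof: the same splitting of the rearrangement into a shearlet and a wavelet part, the same absorption of the anisotropic derivative factor $2^{j|\mathbf{a}|}$ (using $|k|\lesssim 2^{j/2}$) by the weight $2^{-js}$ so that the hypothesis \eqref{eq:assumptionApprox} for the derived systems $\mathcal{SH}(D^{\mathbf{b}}\phi,D^{\mathbf{b}}\psi,D^{\mathbf{b}}\psitilde,c)$ can be applied --- the paper does this bookkeeping on the Fourier side by iteratively trading $2^{-j|\mathbf{a}|}\xi^{\mathbf{a}}$ for $(S_{-k}^{T}A_{-j}\xi)^{\mathbf{a}}$, and your spatial chain rule is the equivalent computation --- and the same $2^{-j}$ versus $2^{-3j}$ wavelet estimates combined with the counts $2^{(1-\tau)j}$ and $2^{(2-\tau)j}$, giving the rate exactly when $\tau>1/3$. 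Two cautions. First, your fallback of treating wavelets whose supports meet $\partial\Omega$ as additional ``bad'' terms does not work: at scale $j$ there are $\sim 2^{j}$ such wavelets (the whole boundary, not only the $O(1)$ transversal crossings of the singularity curve), and with the crude bound $2^{-j}$ this family fails to be in $\ell^{2/3}$; one must instead use, as the paper implicitly does, that the boundary-adapted wavelets with $\upsilon\neq 0$ annihilate polynomials of degree $\leq 1$ so that the integration-by-parts step produces no boundary contributions. Second, for $|\mathbf{a}|<s$ the extra damping $2^{-(s-|\mathbf{a}|)j}$ together with mere boundedness of $D^{\mathbf{a}}u$ is not enough to reach the rate $N^{-3/2}\log^{3/2}N$ (a crude count of $\sim 2^{2j}$ interior shearlets per scale shows this only suffices when $s-|\mathbf{a}|$ is large); as in the paper, one applies the coefficient-decay hypothesis to the lower-order derivatives $D^{\mathbf{a}}u$ themselves, which are again cartoon-like since they are smoother, so your remark should be read in that sense.
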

\begin{proof}
First of all, we have that with $(\Xi_m(u))_{m\in \N}$ being a non-increasing rearrangement of \\
$ {(|\langle u, 2^{-j_n s }\varphi_n \rangle_{H^s(\Omega)}|)_{n\in \N}}$ that
\begin{align} \label{eq:thesplitting}
\Xi_N(u) \lesssim \theta^{\mathrm{sh}}_{\lfloor N/2 \rfloor}(u) + \theta^{\mathrm{w}}_{\lfloor N/2 \rfloor} (u),
\end{align}
where $(\theta^{\mathrm{sh}}_n(u))_{n\in \N}$ is a non-increasing rearrangement of $(|\langle u, 2^{-js}\psi_{j,k,m,\iota} \rangle_{H^s(\Omega)}|)_{(j,k,m,\iota)\in \Lambda_0}$ and $(\theta^{\mathrm{w}}_n(u))_{n\in \N}$ is a non-increasing rearrangement of $(|\langle u, 2^{-js}\omega_{j,m,\upsilon} \rangle_{H^s(\Omega)}|)_{(j,m,\upsilon)\in \Theta_{t,\tau}}$.

We start by estimating $\theta^{\mathrm{sh}}_N(u)$ and we use that
\begin{align}\label{eq:SobolevNormDef1}
|\langle  u, 2^{-js}\psi_{j,k,m,\iota}  \rangle_{H^s(\Omega)}| \leq \sum_{0 \leq |{\mathbf{a}}|\leq s}|\langle  D^{\mathbf{a}} u, 2^{-js}D^{\mathbf{a}} \psi_{j,k,m,\iota} \rangle_{L^2(\Omega)}|.
\end{align}
Let us now assume without loss of generality that $\iota = 1,$ then there holds for $0\leq |\mathbf{a}|\leq s$ that 
\begin{align}
    | \langle D^{\mathbf{a}}u, 2^{-js} D^{{\mathbf{a}}} \psi_{j,k,m,1} \rangle_{L^2(\Omega)} | \leq & ~| \langle D^{\mathbf{a}}u, 2^{-j|{\mathbf{a}}|}D^{{\mathbf{a}}} \psi_{j,k, m, 1} \rangle_{L^2(\Omega)} | \nonumber\\
    ~ = & ~| \langle \widehat{D^{\mathbf{a}}u}, 2^{-j|{\mathbf{a}}|}(2\pi i\xi)^{\mathbf{a}} \widehat{\psi}_{j,k, m, 1} \rangle_{L^2(\R^2)} | \label{eq:plugInHere}\\
      ~ \leq & ~| \langle \widehat{D^{\mathbf{a}}u},  (2\pi i)^s 2^{-j a_2/2}( S_{-k}^T A_{-j} \xi)^{\mathbf{a}} \widehat{\psi}_{j,k, m, 1} \rangle_{L^2(\R^2)} |\nonumber \\&\quad + | \langle \widehat{D^{\mathbf{a}}u}, (2\pi i)^s (2^{-j a_2/2}( S_{-k}^T A_{-j}\xi)^{\mathbf{a}}  - 2^{-j|\mathbf{a}|} \xi^{\mathbf{a}}) \widehat{\psi}_{j,k, m, 1} \rangle_{L^2(\R^2)} |\nonumber \\
      ~ = & ~ \mathrm{I} + \mathrm{II}.\nonumber
\end{align}
Part $\mathrm{I}$ can be estimated by
\begin{align*}
    \mathrm{I} \leq | \langle \widehat{D^{\mathbf{a}}u},  \widehat{(D^{\mathbf{a}}\psi)}_{j,k, m, 1} \rangle_{L^2(\R^2)}|=|\langle D^{\mathbf{a}}u,  (D^{\mathbf{a}}\psi)_{j,k, m, 1} \rangle_{L^2(\Omega)}|.
\end{align*}
We continue with Part $\mathrm{II}$ and observe that 
$$
2^{-j a_2/2}(S_{-k}^T A_{-j}\xi)^{\mathbf{a}}  - 2^{-j|{\mathbf{a}}|}\xi^{\mathbf{a}} = \sum_{|\mathbf{b}| \leq |\mathbf{a}|, b_2 < a_2} c_{b_1,b_2} \xi^{\mathbf{b}},
$$
where $c_{b_1,b_2} \lesssim 2^{-j |\mathbf{a}|}$. Hence, to estimate $\mathrm{II}$ it suffices to estimate 
\begin{align}\label{eq:theEnd}
    &| \langle \widehat{D^{\mathbf{a}}u},(2\pi i)^s 2^{-j|{\mathbf{a}}|} \xi^\mathbf{b} \widehat{\psi}_{j,k, m, 1} \rangle_{L^2(\R^2)} | ,
\end{align}
where $b_2<{a}_2$. At this point we are again in the situation of \eqref{eq:plugInHere} and continue the estimate until we finally arrive at a point, where $a_2 = 0$. In this situation, we have 
\begin{align*}
   | \langle \widehat{D^{\mathbf{a}}u}, (2\pi i)^s 2^{-j|{\mathbf{a}}|} \xi_1^{a_1} \widehat{\psi}_{j,k, m, 1} \rangle_{L^2(\R^2)} | ~=& ~ | (2\pi i)^{s-a_1}\langle \widehat{D^{\mathbf{a}}u}, 2^{-ja_1} (2\pi i\xi_1)^{a_1} \widehat{\psi}_{j,k, m, 1} \rangle_{L^2(\R^2)} |\\
  ~ =& ~ | (2\pi i)^{s-a_1}\langle \widehat{D^{\mathbf{a}}u}, (2\pi i S_{-k}^T A_{-j} \xi)^{(a_1,0)} \widehat{\psi}_{j,k, m, 1} \rangle_{L^2(\R^2)} |\\
  ~ = & ~ |(2\pi i)^{s-a_1} \langle \widehat{D^{\mathbf{a}}u},\widehat{(D^{\mathbf{(a_1,0)}}\psi)}_{j,k, m, 1} \rangle_{L^2(\R^2)} |\\
  ~ = & ~ |(2\pi i)^{s-a_1} \langle D^{\mathbf{a}}u, (D^{\mathbf{(a_1,0)}}\psi)_{j,k, m, 1} \rangle_{L^2(\Omega)} |.
\end{align*} 
From the considerations above we can estimate for some $C>0$
\begin{align}\label{eq:firstHalf}
      \theta^{\mathrm{sh}}_N(u) \lesssim \sum_{|{\mathbf{a}}| \leq s}\sum_{|{\mathbf{b}}| \leq s} \theta^{\mathrm{sh}, \mathbf{b}}_{\lfloor N/C \rfloor}(D^{\mathbf{a}} u),
\end{align}
 where $(\theta^{\mathrm{sh}, \mathbf{b}}_N(g))_{N\in \N}$ denotes the non-increasing rearrangement of $(|\langle  g , 2^{-js}(D^{\mathbf{b}} \psi)_{j,k,m,\iota} \rangle|_{L^2(\Omega)})$. By \eqref{eq:assumptionApprox} we can estimate \eqref{eq:firstHalf} by $N^{-\frac{3}{2}}\log^{\frac{3}{2}} N$ for all $N \in \N$. This yields the estimate of $\theta^{\mathrm{sh}}_N(u)$. 
 We continue by estimating $\theta^{\mathrm{w}}_N(u)$. Assume first that $\suppp \omega_{j, m, \upsilon}$ intersects the singularity curve of $D^{\mathbf{a}}u,$ which shall be denoted by $\gamma^{\mathbf{a}}$ for $0\leq |\mathbf{a}|\leq s.$  Recall that $|\suppp \omega_{j, m, \upsilon}| \lesssim 2^{-2j}$ and $\|D^{\mathbf{a}} \omega_{j, m, \upsilon}\|_\infty \leq 2^{(|{\mathbf{a}}| + 1)j}$. Then, we have 
\begin{align*}
    |\langle  D^{\mathbf{a}}u, 2^{-js}D^{\mathbf{a}} \omega_{j, m, \upsilon} \rangle_{L^2(\Omega)} | &\leq 2^{-js}\|D^{\mathbf{a}}u\|_\infty \|D^{\mathbf{a}} \omega_{j, m, \upsilon}\|_\infty |\suppp \omega_{j, m, \upsilon}| \\
    &\lesssim 2^{-j(s + 2)}2^{(|{\mathbf{a}}| + 1)j} \leq  2^{-j(s + 2)}2^{j(s + 1)} = 2^{-j}.
\end{align*}
From the width of $\Gamma_{\tau(j-t)}$ we know that we have, up to some multiplicative constant, only $2^{(1-\tau)j}$ wavelet elements on scale $j$ intersecting $\gamma^{\mathbf{a}}$. Consequently, since $\tau>1/3,$ we get that \[(|\langle D^{\mathbf{a}}u,2^{-js}D^{\mathbf{a}}\omega_{j, m, \upsilon}\rangle_{L^2(\Omega)}|)_{(j,m,\upsilon) \in \Theta_{t,\tau}, (\suppp \omega_{j, m, \upsilon} \cap \gamma^\mathbf{a}) \neq \emptyset} \in \ell^{\frac{2}{3}}.\]
Lastly, we want to address the wavelets associated with the smooth part of $D^{\mathbf{a}}u$. We assumed that $D^{\mathbf{a}} \omega_{j, m, \upsilon}$ has two vanishing moments for all $j, m$ and $\upsilon \neq 0$ and hence by a taking a Taylor approximation of $D^{\mathbf{a}}u$ on $\suppp \omega_{j, m, \upsilon}$ we obtain
\begin{align*}
    |\langle  D^{\mathbf{a}}u, 2^{-js}D^{\mathbf{a}} \omega_{j, m, \upsilon} \rangle_{L^2(\Omega)} | \lesssim 2^{-js}\|D^{\mathbf{a}} \omega_{j, m, \upsilon} \|_\infty 2^{-2j} 2^{-2j}\lesssim 2^{-3j}.
\end{align*}

Since there are $2^{(2-\tau)j}$ wavelets on every scale we obtain that \[(|\langle D^{\mathbf{a}}u,2^{-js}D^{\mathbf{a}}\omega_{j, m, \upsilon}\rangle_{L^2(\Omega)}|)_{(j,m,\upsilon) \in \Theta_{t,\tau}, (\suppp \omega_{j, m, \upsilon} \cap \gamma^\mathbf{a}) = \emptyset} \in \ell^{\frac{2}{3}}.\]
Combining both estimates for the wavelets, we obtain that \[(|\langle D^{\mathbf{a}}u,2^{-js}D^{\mathbf{a}}\omega_{j, m, \upsilon}\rangle_{L^2(\Omega)}|)_{(j,m,\upsilon) \in \Theta_{t,\tau}}\in \ell^{\frac{2}{3}}.\] This yields that $\theta^{\mathrm{w}}_N(u) \lesssim N^{-\frac{3}{2}}$. Invoking \eqref{eq:thesplitting} concludes the proof.
\end{proof}

\begin{remark}
\begin{itemize}
\item The assumptions of Theorem \ref{thm:optApproxHs} are easily satisfied. As long as $((D^\mathbf{a}\psi)_{j,k,m,\iota})_{(j,k,m,\iota)\in\Lambda}$ is a shearlet system meeting the criteria for optimally sparse approximations of cartoon-like functions the assumptions are fulfilled. The necessary theoretical machinery has been established in \cite{KLcmptShearSparse2011}.
\item Theorem \ref{thm:optApproxHs} also establishes the coefficient decay of a shearlet frame for $H^s(\R^2)$. To observe this one can simply choose $\Omega\supset (0,1)^2$ sufficiently large so that no wavelet element intersects $(0,1)^2$. Then all scalar products of wavelets and $u$ are zero, demonstrating the result. 
\end{itemize}
\end{remark}

Theorem \ref{thm:optApproxHs} implies the following corollary, which describes the best $N$-term approximation rate of the reweighted hybrid shearlet-wavelet system, if it constitutes a frame.

\begin{corollary}\label{cor:optApproxHs}
Under the assumptions of Theorem \ref{thm:optApproxHs} and the additional assumption that $(2^{-j_n s} \varphi_n)_{n\in \N}$ is a frame for $H^s(\Omega)$ we have that
\begin{align*}
    \|u - u_N\|_{H^s(\Omega)}^2 \lesssim N^{-2} \log^{3}(N),
\end{align*}
where
$$
u_N := \sum_{n \in E_N} \langle u, 2^{-j_n s} \varphi_n \rangle_{H^s(\Omega)} (2^{-j_n s}\varphi_n)^{\mathrm{dual}}
$$
and $E_N$ contains the indices corresponding to the $N$ largest coefficients in modulus of $\langle u, 2^{-j_n s}\varphi_n \rangle_{H^s(\Omega)}$.
\end{corollary}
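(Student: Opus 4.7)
The plan is to reduce everything to the coefficient decay already furnished by Theorem \ref{thm:optApproxHs} and then close the estimate using the frame property of the reweighted hybrid shearlet-wavelet system. Concretely, since $(2^{-j_n s}\varphi_n)_{n\in\N}$ is a frame for $H^s(\Omega)$, the canonical dual frame expansion
\[
u = \sum_{n\in\N} \langle u, 2^{-j_n s}\varphi_n \rangle_{H^s(\Omega)}\, (2^{-j_n s}\varphi_n)^{\mathrm{dual}}
\]
converges in $H^s(\Omega)$, so by the definition of $u_N$ the approximation error takes the form
\[
u - u_N = \sum_{n\notin E_N} \langle u, 2^{-j_n s}\varphi_n \rangle_{H^s(\Omega)}\, (2^{-j_n s}\varphi_n)^{\mathrm{dual}}.
\]

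The first main step is to invoke boundedness of the synthesis operator of the dual frame. Since the canonical dual frame of a frame is itself a frame, its synthesis operator is a bounded map from $\ell^2(\N)$ into $H^s(\Omega)$, with operator norm bounded by the square root of the upper dual frame bound. Applying this to the sequence supported on $\N\setminus E_N$ yields
\[
\|u - u_N\|_{H^s(\Omega)}^2 \lesssim \sum_{n\notin E_N} \bigl|\langle u, 2^{-j_n s}\varphi_n \rangle_{H^s(\Omega)}\bigr|^2.
\]
Because $E_N$ by construction indexes the $N$ largest coefficients in modulus, the right-hand side equals $\sum_{m>N}\Xi_m(u)^2$, where $(\Xi_m(u))_{m\in\N}$ is the non-increasing rearrangement from Theorem \ref{thm:optApproxHs}.

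The second step is to plug in the coefficient decay $\Xi_m(u) \lesssim m^{-3/2}\log^{3/2}(m)$ guaranteed by Theorem \ref{thm:optApproxHs}, giving $\Xi_m(u)^2 \lesssim m^{-3}\log^{3}(m)$, and then to estimate the tail sum by an integral comparison:
\[
\sum_{m > N} m^{-3}\log^{3}(m) \;\lesssim\; \int_{N}^{\infty} x^{-3}\log^{3}(x)\, dx \;\lesssim\; N^{-2}\log^{3}(N).
\]
Combining these two steps delivers exactly $\|u - u_N\|_{H^s(\Omega)}^2 \lesssim N^{-2}\log^{3}(N)$, as claimed.

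There is no genuine obstacle here: the real work was done in Theorem \ref{thm:optApproxHs}, and this corollary simply translates the coefficient decay into an $H^s(\Omega)$-error bound via the frame synthesis inequality. The only mild care needed is that the implicit inequality $\sigma_{N,\Phi,\Hil}(f)\lesssim N^{-s}$ recorded in \eqref{BestN-Term-weak} is stated for weak $\ell^p$ sequences and would only give $N^{-2+\epsilon}$ losses; hence one should avoid routing through weak $\ell^p$ and instead perform the tail-sum estimate directly, which preserves the precise $\log^{3}(N)$ factor.
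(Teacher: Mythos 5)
Your proposal is correct and follows essentially the same route as the paper: bound $\|u-u_N\|_{H^s(\Omega)}$ via the bounded synthesis operator of the dual frame, then estimate the $\ell^2$ tail of the coefficients using the decay $\Xi_m(u)\lesssim m^{-3/2}\log^{3/2}(m)$ from Theorem \ref{thm:optApproxHs}. The paper leaves the tail-sum computation implicit, whereas you spell it out (and correctly note that routing through weak $\ell^p$ via \eqref{BestN-Term-weak} would lose the precise logarithmic factor); otherwise the arguments coincide.
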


\begin{proof}
From the frame property we obtain that the synthesis operator of the dual frame is bounded and hence 
\begin{align*}
    \|u - u_N\|_{H^s(\Omega)}^2 \lesssim \|(\langle u, 2^{-j_n s} \varphi_n \rangle_{H^s(\Omega)})_{n\not \in E_N} \|_{\ell^2}^2.
\end{align*}
From Theorem \ref{thm:optApproxHs} we conclude that $\|(\langle u, 2^{-j_n s}\varphi_n \rangle_{H^s(\Omega)})_{n\not \in E_N} \|_{\ell^2}^2\lesssim N^{-2} \log^{3}(N)$.
\end{proof}

We will observe in Subsection \ref{sec:comp2Wave} that the approximation rate of the hybrid shearlet-wavelet system is much faster than that provided by a pure wavelet system. Moreover, we can also discuss whether the approximation rate of 
Corollary \ref{cor:optApproxHs} is optimal. If $s=2$, the discussion in Subsection \ref{eq:regOfSols} indicates that every cartoon-like function $g$ can be expressed as $g = \Delta u$ for a function $u$ such that $D^\mathbf{b}u \in \mathcal{E}^2(\nu, \Omega)$ for all multi-indices $|\mathbf{b}|\leq s$. 
While the arguments of Subsection \ref{eq:regOfSols} are technically only proved for piecewise $C^\infty$ functions, they are expected to also hold for piecewise $C^2$ functions.

Of course this is not a formal proof, since the argument of Subsection \ref{eq:regOfSols} only holds for piecewise smooth functions $g$.

However, if we assume the above statement to be true, then Corollary \ref{cor:optApproxHs} implies that the new hybrid shearlet-wavelet system yields almost the optimal approximation rate, due to the following argument. Assume we have a dictionary $\Phi$. Further assume that for all $u$ such that $D^\mathbf{b}u \in \mathcal{E}^2(\nu, \Omega)$ for all $|\mathbf{b}|\leq 2$ we have that $\Phi$ provides an $N$-term approximation rate of
$$
\|u - u_N\|_{H^2(\Omega)}^2 \lesssim N^{-2-\epsilon},
$$
for some $\epsilon>0$, where $u_N$ is the best $N$-term approximation of $u$ with respect to $\Phi$. Clearly this also implies
$$
\left\|D_{x_{1}}^{2} u - u_N^{x_1}\right\|_{L^2(\Omega)}^2 \lesssim N^{-2-\epsilon},~
\left\|D_{x_{2}}^{2} u - u_N^{x_2}\right\|_{L^2(\Omega)}^2 \lesssim N^{-2-\epsilon},
$$
where $u_N^{x_\iota}$ is the best $N$-term approximation of $D_{x_{\iota}}^{2} u$ with respect to the system $\Phi_{\iota}: = \{D_{x_{\iota}}^{2} \phi: \phi \in \Phi\}.$ Consequently $\Phi_{1} \cup \Phi_{2}$ yields a best $N$-term approximation rate of $D_{x_{1}}^{2} u + D_{x_{2}}^{2} u = \Delta u$ which is faster than $O(N^{-1})$. Such a result would be in conflict with the well-known optimality result of \cite{DCartoonLikeImages2001} which states that the optimal achievable best $N$-term approximation rate for the class of cartoon-like functions is given by $\sigma_{N,\Phi}(g) = O(N^{-1}),$ for cartoon-like functions $g$,
provided that only polynomial depth search is used to compute the approximation. Hence the approximation rate of Corollary \ref{cor:optApproxHs} is very likely to be optimal.

This argument can not be extended easily to $s \neq 2$ since it is not necessarily true that a given cartoon-like function $g$ is the $s$-th derivative of a function $u$ such that $D^\mathbf{b}u \in \mathcal{E}^2(\nu, \Omega)$ for all multi-indices $|\mathbf{b}|\leq s$. 

Now we turn our attention to a further important case where the $(s+l)$-th derivatives of $u$ are cartoon-like. Again, such functions appear frequently as solutions of elliptic differential equations with cartoon-like right-hand side (see Subsection \ref{eq:regOfSols}).

\begin{theorem}\label{thm:optApproxHsDerivative}
Let $s\in \N$, $0<l \in \N$, $\nu >0$ and $\phi, \psi, \psitilde \in L^2(\R^2),$ and $c\in \R^2$. For $|\mathbf{a}|\leq s,$ let 
\[
    \left\{(D^{\mathbf{a}}\psi)_{j,k,m,\iota}:(j,k,m,\iota)\in \Lambda\right\}:= \mathcal{SH}\left(D^{\mathbf{a}}\phi, D^{\mathbf{a}}\psi,D^{\mathbf{a}}\psitilde,c\right).
\] 
Further assume that for all $\mathfrak{f}\in L^2(\Omega)$ such that $D^\mathbf{b}\mathfrak{f} \in \mathcal{E}^2(\nu, \Omega)$ for all $|\mathbf{b}| \leq l$ we have that 
\begin{align} \label{eq:Assumptionsplusl}
\|(\langle \mathfrak{f}, (D^{\mathbf{a}} \psi)_{j,k,m,\iota}\rangle_{L^2(\Omega)})_{(j,k,m,\iota) \in \Lambda}\|_{\ell^{\frac{2}{l+3}}} < \infty,
\end{align}
for all $|\mathbf{a}|\leq s$. 
Furthermore, let $(\omega_{j,m,\upsilon})_{(j,m,\upsilon) \in \Theta}$ be a wavelet system on $\Omega$ such that for all $(j, m, \upsilon) \in \Theta$:
\begin{itemize}
    \item $|\suppp \omega_{j, m, \upsilon}| \lesssim 2^{-2j}$,
    \item $\|D^{\mathbf{a}} \omega_{j, m, \upsilon}\|_\infty \lesssim 2^{(|{\mathbf{a}}| + 1)j}$, 
    \item $\omega_{j, m, \upsilon}$ has $l+2$ vanishing moments for all $\upsilon \neq 0$.
\end{itemize} 
If $\tau >1/3$, then 
$$
\|(\langle u, 2^{-j_n s }\varphi_n \rangle_{H^s(\Omega)})_{n\in \N}\|_{\ell^{\frac{2}{l+3}}} < \infty.
$$
for all $u$ such that $D^{\mathbf{a}} u \in \mathcal{E}^2(\nu, \Omega)$ for all $|{\mathbf{a}}| = s+l$.
\end{theorem}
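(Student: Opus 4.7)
The plan is to mirror the proof of Theorem \ref{thm:optApproxHs}: split the sequence $(\langle u, 2^{-j_n s}\varphi_n\rangle_{H^s(\Omega)})_{n\in\N}$ according to whether the index $n$ belongs to $\Lambda_0$ (shearlets) or to $\Theta_{t,\tau}$ (wavelets) and prove $\ell^{2/(l+3)}$-membership of each contribution separately. For the shearlet part, I would reuse \emph{verbatim} the Fourier-side computation from the proof of Theorem \ref{thm:optApproxHs} that rewrites $|\langle u, 2^{-js}\psi_{j,k,m,\iota}\rangle_{H^s(\Omega)}|$ as a finite sum of terms of the form $|\langle D^{\mathbf{a}}u, (D^{\mathbf{b}}\psi)_{j,k,m,\iota}\rangle_{L^2(\Omega)}|$ with $|\mathbf{a}|,|\mathbf{b}|\leq s$. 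For any fixed $\mathbf{a}$ with $|\mathbf{a}|\leq s$, every $D^{\mathbf{c}}(D^{\mathbf{a}}u)=D^{\mathbf{a}+\mathbf{c}}u$ with $|\mathbf{c}|\leq l$ is cartoon-like on $\Omega$, so assumption \eqref{eq:Assumptionsplusl} applied to $D^{\mathbf{a}}u$ with the shearlet system $\mathcal{SH}(D^{\mathbf{b}}\phi,D^{\mathbf{b}}\psi,D^{\mathbf{b}}\widetilde{\psi},c)$ yields $\ell^{2/(l+3)}$-summability of the associated coefficient sequence; finite sums preserve $\ell^{2/(l+3)}$, so the shearlet contribution is under control.

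For the wavelet part, the two new ingredients are the $l+2$ vanishing moments of $\omega_{j,m,\upsilon}$ and the extra smoothness of $D^{\mathbf{a}}u$ compared to a single cartoon-like function. Split $\Theta_{t,\tau}$ according to whether $\suppp\omega_{j,m,\upsilon}$ meets the singularity curve $\gamma^{\mathbf{a}}$ of $D^{\mathbf{a}}u$. For wavelets whose support avoids $\gamma^{\mathbf{a}}$, a Taylor expansion of $D^{\mathbf{a}}u$ of degree $l+1$ around $m$, combined with the vanishing-moment property (which is inherited by $D^{\mathbf{a}}\omega_{j,m,\upsilon}$ via integration by parts), gives
\[
|\langle D^{\mathbf{a}}u, 2^{-js}D^{\mathbf{a}}\omega_{j,m,\upsilon}\rangle_{L^2(\Omega)}| \lesssim 2^{-js}\cdot 2^{-(l+2)j}\cdot 2^{(|\mathbf{a}|+1)j}\cdot 2^{-2j}\lesssim 2^{-(l+3)j}.
\]
With at most $\lesssim 2^{(2-\tau)j}$ such wavelets per scale, the grouped series $\sum_j 2^{(2-\tau)j}\cdot 2^{-(l+3)j\cdot 2/(l+3)}=\sum_j 2^{-\tau j}$ converges for any $\tau>0$. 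For wavelets whose support meets $\gamma^{\mathbf{a}}$, I would use that $D^{\mathbf{a}+\mathbf{c}}u\in\mathcal{E}^2(\nu,\Omega)\subset L^\infty(\Omega)$ for $|\mathbf{c}|\leq l$, so a Taylor expansion of $D^{\mathbf{a}}u$ of degree $l-1$ has remainder of size $\lesssim 2^{-lj}$ on $\suppp\omega_{j,m,\upsilon}$; vanishing moments annihilate the polynomial part and one obtains a bound of order $2^{-(l+1)j}$. Since at most $\lesssim 2^{(1-\tau)j}$ wavelets per scale intersect $\gamma^{\mathbf{a}}$ and $l\geq 1$, the inequality $(1-\tau)-2(l+1)/(l+3)<0$ holds for any $\tau>0$ and $\ell^{2/(l+3)}$-summability follows.

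The main obstacle is the wavelet estimate near $\gamma^{\mathbf{a}}$: to run a Taylor expansion of order $l-1$ that remains valid across the curve one needs the $(l-1)$-st derivatives of $D^{\mathbf{a}}u$ to be absolutely continuous on line segments crossing $\gamma^{\mathbf{a}}$ and the $l$-th derivatives to be essentially bounded. Both follow from the cartoon-like structure of $D^{\mathbf{a}+\mathbf{c}}u$ for $|\mathbf{c}|\leq l$ together with Taylor's theorem in integral-remainder form. Combining the shearlet and wavelet estimates then yields the claim.
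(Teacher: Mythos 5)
Your proposal follows essentially the same route as the paper's own proof: the same split into shearlet and wavelet contributions, the same reduction (via the computations of Theorem \ref{thm:optApproxHs}) of the $H^s$ shearlet coefficients to $L^2$ pairings with the differentiated shearlet systems so that \eqref{eq:Assumptionsplusl} applies, and the same Taylor/vanishing-moment estimates ($\lesssim 2^{-(l+1)j}$ for wavelets meeting $\gamma^{\mathbf{a}}$, $\lesssim 2^{-(l+3)j}$ otherwise) combined with the scale-wise counts $2^{(1-\tau)j}$ and $2^{(2-\tau)j}$. Your explicit justification of the degree-$(l-1)$ Taylor expansion across the singularity curve is a mild refinement of, not a departure from, the paper's argument.
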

\begin{proof}
First of all, for $p = \frac{2}{l+3}$ we can certainly make the decomposition
\begin{align*}
    \|(\langle u, 2^{-j_n s }\varphi_n \rangle_{H^s(\Omega)})_{n\in \N}\|_{\ell^{p}}^{p} =  &\|(\langle u, 2^{-js}\psi_{j,k,m,\iota}\rangle_{H^s(\Omega)})_{(j,k,m,\iota) \in \Lambda_0}\|_{\ell^{p}}^{p}  \\ &+\|(\langle u, 2^{-js}\omega_{j,m,\upsilon}\rangle_{H^s(\Omega)})_{(j,m,\upsilon) \in \Theta_{t,\tau}}\|_{\ell^{p}}^{p}
    = \ \mathrm{I} + \mathrm{II}.
\end{align*}
Thus, it sufficies to prove the finiteness of $\mathrm{I}$ and $\mathrm{II}$. We start with $\mathrm{I}$. Repeating the computations from \eqref{eq:SobolevNormDef1} to \eqref{eq:theEnd}, we obtain that 
\begin{align*}
    \mathrm{I} \lesssim \sum_{|{\mathbf{a}}| \leq s}\sum_{|{\mathbf{b}}| \leq s} \| (\langle D^{\mathbf{a}} u, (D^{\mathbf{b}}\psi)_{j,k,m,\iota}\rangle_{L^2(\Omega)})_{(j,k,m,\iota) \in \Lambda_0}\|_{\ell^{p}}^p < \infty,
\end{align*}
where the finiteness follows by the assumption on the regularity of $u$ and \eqref{eq:Assumptionsplusl}. 

We continue by estimating $\mathrm{II}$. Assume first that $\suppp \omega_{j, m, \upsilon}$ intersects the discontinuity of $D^{\mathbf{a}}u$. Recall that $|\suppp \omega_{j, m, \upsilon}| \lesssim 2^{-2j}$ and $\|D^{\mathbf{a}} \omega_{j, m, \upsilon}\|_\infty \lesssim 2^{(|{\mathbf{a}}| + 1)j}$ and that $\omega$ has more than $l$ vanishing moments. Thus, using a Taylor approximation of $D^{\mathbf{a}}u$ on $\suppp \omega_{j, m, \upsilon}$ we can estimate 
\begin{align*}
    |\langle  D^{\mathbf{a}}u, 2^{-js}D^{\mathbf{a}} \omega_{j, m, \upsilon} \rangle_{L^2(\Omega)} | &\lesssim 2^{-js} 2^{-lj} \|D^{\mathbf{a}} \omega_{j, m, \upsilon}\|_\infty |\suppp \omega_{j, m, \upsilon}| \lesssim  2^{-(l+1)j}.
\end{align*}
From the width of $\Gamma_{\tau(j-t)}$ we know that, up to some constant, we have only $2^{(1-\tau)j}$ wavelet elements intersecting the boundary curve of $D^{\mathbf{a}} u$, which we denote by $\gamma^{\mathbf{a}}$. 

Consequently, we get that $(|\langle D^{\mathbf{a}}u,2^{-js}D^{\mathbf{a}}\omega_{j, m, \upsilon}\rangle_{L^2(\Omega)}|)_{(j,m,\upsilon) \in \Theta_{t,\tau}, (\suppp \omega_{j, m, \upsilon} \cap \gamma^\mathbf{a}) \neq \emptyset} \in \ell^{p}$ since $\tau >1/3$.
We proceed by estimating the norm associated to the wavelets that do not intersect $\gamma^\mathbf{a}$. We assumed that $D^{\mathbf{a}} \omega_{j, m, \upsilon}$ has $l+2$ vanishing moments for all $j, m$ and $\upsilon \neq 0$ and hence
\begin{align*}
    |\langle  D^{\mathbf{a}}u, 2^{-js}D^{\mathbf{a}} \omega_{j, m, \upsilon} \rangle_{L^2(\Omega)} | \lesssim 2^{-js}\|D^{\mathbf{a}} \omega_{j, m, \upsilon} \|_\infty 2^{-(l+2)j} 2^{-2j}\lesssim 2^{-(l+3)j}.
\end{align*}
Since there are $2^{(2-\tau)j}$ wavelets on every scale we obtain that  \[(|\langle D^{\mathbf{a}}u,2^{-js}D^{\mathbf{a}}\omega_{j, m, \upsilon}\rangle_{L^2(\Omega)}|)_{(j,m,\upsilon) \in \Theta_{t,\tau}, (\suppp \omega_{j, m, \upsilon} \cap \gamma^\mathbf{a}) = \emptyset} \in \ell^{p}.\]
Combining both estimates for the wavelets we obtain that \[(|\langle u,2^{-js} \omega_{j, m, \upsilon}\rangle_{H^s(\Omega)}|)_{(j,m,\upsilon) \in \Theta_{t,\tau}}\in \ell^{p}.\] This finishes the estimate of $\mathrm{II}$ and thus the proof. 
\end{proof}

\begin{remark}
The assumptions of Theorem \ref{thm:optApproxHsDerivative} can be easily checked by considering the results in \cite{Pet2014discontderiv}, which yields the required approximation rates for functions with cartoon-like derivatives by shearlets.
\end{remark}

We can deduce the following corollary establishing best $N$-term approximation rates in the case that the reweighted hybrid shearlet-wavelet system yields a frame.

\begin{corollary}\label{cor:optApproxHsDerivative}
Under the assumptions of Theorem \ref{thm:optApproxHsDerivative} and the additional assumption that $(2^{-j_n s} \varphi_n)_{n\in \N}$ is a frame for $H^s(\Omega)$ we have that
\begin{align*}
    \|u - u_N\|_{H^s(\Omega)}^2 \lesssim N^{-(l+2)},
\end{align*}
where
$$
u_N := \sum_{n \in E_N} \langle u, 2^{-j_n s} \varphi_n \rangle_{H^s(\Omega)} (2^{-j_n s}\varphi_n)^{\mathrm{dual}},
$$
and $E_N$ contains the indices corresponding to the $N$ largest coefficients $\langle u, 2^{-j_n s}\varphi_n \rangle_{H^s(\Omega)}$.
\end{corollary}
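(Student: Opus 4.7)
The plan is to reduce the corollary to the coefficient-space estimate furnished by Theorem \ref{thm:optApproxHsDerivative}, in exactly the same spirit as the proof of Corollary \ref{cor:optApproxHs}, the only difference being that we now exploit $\ell^p$-summability with a stronger exponent $p = 2/(l+3) < 1$ instead of just weak $\ell^p$-type decay.

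First, I would invoke the frame hypothesis on $(2^{-j_n s}\varphi_n)_{n\in\N}$. Since this system is a frame for $H^s(\Omega)$, its canonical dual frame $((2^{-j_n s}\varphi_n)^{\mathrm{dual}})_{n\in\N}$ is also a frame, and in particular its synthesis operator is a bounded linear map $\ell^2(\N) \to H^s(\Omega)$. Applying this operator to the sequence of coefficients supported outside $E_N$ gives
\[
\|u - u_N\|_{H^s(\Omega)}^2 \;\lesssim\; \bigl\|\bigl(\langle u, 2^{-j_n s}\varphi_n\rangle_{H^s(\Omega)}\bigr)_{n\notin E_N}\bigr\|_{\ell^2}^2.
\]

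Second, I would translate $\ell^p$-membership into decay of the non-increasing rearrangement. Set $p := 2/(l+3)$ and let $(c_n^*)_{n\in\N}$ denote the non-increasing rearrangement of $(|\langle u, 2^{-j_n s}\varphi_n\rangle_{H^s(\Omega)}|)_{n\in\N}$. By Theorem \ref{thm:optApproxHsDerivative} this sequence lies in $\ell^p$; using that $(c_n^*)$ is non-increasing, the estimate $N (c_N^*)^p \le \sum_{n\le N} (c_n^*)^p \le \|(c_n)\|_{\ell^p}^p$ yields $c_n^* \lesssim n^{-1/p}$, i.e.\ the embedding $\ell^p \hookrightarrow \ell^p_w$ noted in Section 2. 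Since $E_N$ selects the $N$ largest coefficients and $2/p = l+3 > 1$, we obtain
\[
\bigl\|\bigl(\langle u, 2^{-j_n s}\varphi_n\rangle_{H^s(\Omega)}\bigr)_{n\notin E_N}\bigr\|_{\ell^2}^2 \;=\; \sum_{n > N} (c_n^*)^2 \;\lesssim\; \sum_{n > N} n^{-2/p} \;\lesssim\; N^{-(2/p - 1)} \;=\; N^{-(l+2)}.
\]
Chaining this with the frame bound above finishes the proof.

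I do not anticipate any genuine obstacle here; all the substantive work has already been performed in Theorem \ref{thm:optApproxHsDerivative}, which controls the $\ell^p$-norm of the coefficient sequence. The corollary is simply the standard conversion between coefficient-space sparsity and best $N$-term approximation error, combined with the boundedness of the dual synthesis operator provided by the frame assumption.
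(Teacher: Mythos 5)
Your proposal is correct and takes essentially the same route as the paper: the paper's proof simply notes that Theorem \ref{thm:optApproxHsDerivative} gives $\ell^{2/(l+3)}$-membership, uses $\ell^{p}\hookrightarrow\ell^{p}_w$, and cites \eqref{BestN-Term-weak}, whereas you unfold that cited implication explicitly (boundedness of the dual-frame synthesis operator plus the rearrangement tail estimate $c_n^*\lesssim n^{-(l+3)/2}$), exactly in the spirit of the paper's proof of Corollary \ref{cor:optApproxHs}. There is no gap; all steps, including the exponent bookkeeping $2/p-1=l+2$, are accurate.
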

\begin{proof}
By Theorem \ref{thm:optApproxHsDerivative} we obtain that $(\langle u, 2^{-j_n s} \varphi_n \rangle_{H^s(\Omega)})_{n\in \N}\in \ell^{\frac{2}{l+3}}\hookrightarrow \ell^{\frac{2}{l+3}}_w.$ Invoking \eqref{BestN-Term-weak} yields the result.
\end{proof} 

\subsection{Comparison to wavelets}\label{sec:comp2Wave}

To assess the quality of the approximation rates achieved by hybrid shearlet-wavelet systems in Corollaries \ref{cor:optApproxHs} and \ref{cor:optApproxHsDerivative}, we should put them into perspective with approximation rates which are achievable by pure wavelet systems.

We will observe with the help of the following theorem that the approximation rate of Corollary \ref{cor:optApproxHs} cannot be achieved by pure wavelet systems.

\begin{theorem}[\cite{Coh2000}]
Assume that for $t \in [s, s']$ with $s'>s$ the spaces $B_{q,q}^t(\Omega)$, $t-s = d/q - d/p$ admit a wavelet characterization of the form
$$
\|u\|_{B_{p,p}^t(\Omega)} \sim \|(2^{tj} 2^{2(\frac{1}{2} - \frac{1}{p})j}  \|(c_\lambda)_{\lambda \in \Theta_j} \|_{\ell^p})_{j\geq 0} \|_{\ell^p}, 
$$
for all $u = \sum_{\lambda \in \Theta} c_\lambda \omega_\lambda$. Then, for $t \in ]s,s'[$, $t-s = 2/q - 2/p$, we have the norm equivalence 
\begin{align*}
    \|u\|_{B^t_{q,q}(\Omega)} \sim \|u\|_{B_{p,p}^s(\Omega)} + \| (2^{j(t-s)} \inf_{\mathfrak{g} \in \Sigma_j} \|u - \mathfrak{g}\|_{B_{p,p}^s(\Omega)})_{j\geq 0} \|_{\ell^q},
\end{align*}
where $\Sigma_j$ is the space of all $u = \sum_{n \in \Theta} c_n \omega_n$ with all but $2^{2j}$ coefficients $c_n$ equal to zero.
\end{theorem}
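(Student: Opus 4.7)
The plan is to reduce both sides of the claimed equivalence to conditions on a single sequence of wavelet coefficients. Writing $u = \sum_{\lambda \in \Theta} c_\lambda \omega_\lambda$ and setting $d_\lambda := 2^{sj(\lambda)} 2^{2(1/2 - 1/p)j(\lambda)} c_\lambda$, the hypothesized wavelet characterization immediately gives $\|u\|_{B^s_{p,p}(\Omega)} \sim \|(d_\lambda)\|_{\ell^p(\Theta)}$. The key algebraic observation is that the exponent identity $t - s = 2/q - 2/p$ is precisely what forces the two normalizing weights
\[
2^{tj}\,2^{2(\tfrac12-\tfrac1q)j} \quad \text{and} \quad 2^{sj}\,2^{2(\tfrac12-\tfrac1p)j}
\]
to coincide, since $(t-s) + 2(1/p - 1/q) = 0$. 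Consequently, the \emph{same} sequence $(d_\lambda)$ also satisfies $\|u\|_{B^t_{q,q}(\Omega)} \sim \|(d_\lambda)\|_{\ell^q(\Theta)}$ by the wavelet characterization applied at $(t,q)$.

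Next, since the wavelet basis is unconditional for $B^s_{p,p}(\Omega)$ and $\Sigma_j$ consists of wavelet sums with at most $2^{2j}$ nonzero atoms, the best approximation is, up to constants, attained by retaining the coefficients of largest modulus. Letting $(d^*_n)_{n\geq 1}$ denote the non-increasing rearrangement of $(|d_\lambda|)_{\lambda \in \Theta}$, this yields
\[
\inf_{\mathfrak{g}\in\Sigma_j} \|u-\mathfrak{g}\|_{B^s_{p,p}(\Omega)} \;\sim\; \Big(\sum_{n > 2^{2j}} (d^*_n)^{p}\Big)^{1/p}.
\]
Combining with the first step, the theorem reduces to the purely discrete equivalence
\[
\sum_{n\geq 1}(d^*_n)^q \;\sim\; \Big(\sum_{n\geq 1}(d^*_n)^p\Big)^{q/p} + \sum_{j\geq 0} 2^{jq(t-s)} \Big(\sum_{n > 2^{2j}} (d^*_n)^{p}\Big)^{q/p},
\]
valid whenever $t-s = 2/q - 2/p$.

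I would then prove this last equivalence by a dyadic block decomposition: partition the indices into $I_j = \{n : 2^{2j} < n \leq 2^{2(j+1)}\}$, bound the $\ell^q$-mass on $I_j$ in terms of the $\ell^p$-tail starting at $2^{2j}$ via H\"older's inequality together with monotonicity of the rearrangement, and then sum over $j$ using a discrete Hardy-type inequality. This is essentially the classical Stechkin/DeVore--Popov equivalence between nonlinear approximation spaces and weighted Lorentz-type sequence spaces. The exponent identity is exactly what permits the geometric factors $2^{jq(t-s)}$ to absorb the $\ell^p$-to-$\ell^q$ conversion factor $2^{2j(1/p - 1/q)}$ on each block of length $2^{2j}$.

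The main obstacle is this final sequence-space step. The wavelet-coefficient translation and the thresholding identification are routine consequences of the hypotheses, but the sharp Lorentz-type equivalence between the weighted $\ell^q$-norm of $(d^*_n)$ and the dyadic sum of $\ell^p$-tail approximation errors is what carries the theorem, and both directions require the exponent balance $t-s = 2/q - 2/p$ to be exact—any deviation breaks the Hardy summation that connects block-wise and tail-wise estimates.
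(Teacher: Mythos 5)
The paper does not prove this statement at all: it is quoted verbatim (with a citation to \cite{Coh2000}) as a known result from Cohen's book, so there is no internal proof to compare against. Your argument is, in substance, the standard proof of this classical theorem (DeVore--Popov/Cohen), and its structure is sound: the exponent balance $t-s=2/q-2/p$ together with $\#\Theta_j\sim 2^{2j}$ (i.e.\ $d=2$) indeed makes the weights $2^{tj}2^{2(1/2-1/q)j}$ and $2^{sj}2^{2(1/2-1/p)j}$ coincide, so the same renormalized coefficient sequence $(d_\lambda)$ realizes both $\|u\|_{B^s_{p,p}}\sim\|d\|_{\ell^p}$ and $\|u\|_{B^t_{q,q}}\sim\|d\|_{\ell^q}$; the identification of $\inf_{\mathfrak{g}\in\Sigma_j}\|u-\mathfrak{g}\|_{B^s_{p,p}}$ with the $\ell^p$-tail of the decreasing rearrangement is the routine transfer through the norm equivalence (keep the $2^{2j}$ largest coefficients for the upper bound, and note any competitor from $\Sigma_j$ leaves at least that tail for the lower bound); and the remaining discrete equivalence is exactly the Stechkin/DeVore--Popov identification $A^{\gamma}_q(\ell^p)=\ell^{q}$ with $\gamma=1/q-1/p$, which your block decomposition over $I_j=\{n:2^{2j}<n\leq 2^{2(j+1)}\}$ plus H\"older and a discrete Hardy-type summation does establish. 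Two small points to make explicit if you write this out: first, $q$ (and possibly $p$) may be less than $1$, so all spaces are quasi-normed and the triangle inequality must be replaced by the $q$-triangle inequality in the Hardy step (the block argument survives this unchanged); second, you are using the hypothesized wavelet characterization both at $(s,p)$ and at the intermediate pair $(t,q)$, which is precisely what the hypothesis ``for $t\in[s,s']$'' grants, so this should be cited rather than treated as automatic.
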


One can easily compute, that a function in $H^s(\Omega)$ whose $s$-th order partial derivatives are cartoon-like is not in $B^{s+1}_{q,q}(\Omega)$ for any $q$. Hence we obtain with $t = s+1$ and $p = 2$ and an accordingly chosen $q$ that 
$$
\| (2^{j} \inf_{\mathfrak{g} \in \Sigma_j} \|u - \mathfrak{g}\|_{H^s(\Omega)})_{j\geq 0} \|_{\ell^q},
$$
is not finite. Therefore it cannot hold that 
$$
\inf_{\mathfrak{g} \in \Sigma_j} \|u - \mathfrak{g}\|_{H^s(\Omega)} \lesssim 2^{-j-\epsilon},
$$
for some $\epsilon >0$. Consequently, since $\#\Sigma_j = 2^{2j}$, the best $N$-term approximation rate in $H^s(\Omega)$-norm does not decrease faster than $N^{-\frac{1}{2} -\epsilon}$ for any $\epsilon>0$. Invoking for some $l\in \N$ the same argument for a function $u \in H^{s+l}(\Omega)$ whose $(s+l)$-th derivative is cartoon-like yields that the approximation rate with respect to the $H^s(\Omega)$-norm will not decrease faster than $N^{-\frac{l+1}{2} -\epsilon}$ for any $\epsilon >0$. If one compares these results to the approximation rates by hybrid shearlet-wavelet systems of Corollaries \ref{cor:optApproxHs} and \ref{cor:optApproxHsDerivative} we observe that the approximation rates by reweighted hybrid shearlet-wavelet systems are always superior.

\section{Numerical examples}\label{sec:numExps}

We aim to demonstrate the future potential of shearlet systems in view of possible applications in discretization of PDEs. Towards this goal, we will provide some numerical examples for the solution of a Poisson equation with cartoon-like right-hand side using an adaptive discretization by the new shearlet system. 

\subsection{Adaptive solution of PDEs}
The adaptive algorithm is based on the works of \cite{cohen2001adaptive, Ste2003}, which establish a method to adaptively solve operator equations. In particular, let $L:\mathcal{H}\rightarrow \mathcal{H}'\cong \mathcal{H}$ be a linear, bounded and boundedly invertible operator that induces a symmetric and elliptic bilinear form. Moreover, let $f\in\mathcal{H}'$. We want to find the uniquely determined $u\in \mathcal{H}$ such that 
\begin{align} \label{eq:opereq}
Lu=f.  
\end{align} 
Using a frame for $\mathcal{H}$, the problem can now be transformed into a discrete one by setting $u=T^{*}_{\Phi}\textbf{u}.$ We define 
\begin{align}\label{eq:DefinitionOfL}
\textbf{L}:\ell^2(\Lambda)\rightarrow \ell^2(\Lambda),~\textbf{L}=T_{\Phi}LT^*_{\Phi}, \text{ and } \textbf{f}=T_{\Phi}f\in \ell^2(\Lambda).
\end{align}
It was shown in \cite{Ste2003} that, if $L$ fulfills the properties described above, solving \eqref{eq:opereq} is equivalent to solving the discretized system 
\begin{align} \label{eq:discsys}
\textbf{Lu}=\textbf{f}.
\end{align}

Additionally in \cite{cohen2001adaptive} an algorithm named \textbf{SOLVE} was developed that provides a solution to \eqref{eq:discsys} using adaptive refinements. Under certain assumptions on the mappings $\mathbf{L},\mathbf{P}_{\mathrm{ran}\mathbf{L}},$ a discretized solution $\mathbf{u}_N$ with $N$ non-zero entries can be calculated with $O(N)$ operations. The convergence rate of the $\mathbf{u}_N$ to the exact solution $\mathbf{u}$ of \eqref{eq:discsys} is given by the error of the best $N$-term approximation rate of $u$ with respect to $\Phi.$ Hence the algorithm's convergence rate is asymptotically optimal. 

\subsection{Setup}\label{sec:Setup}

Our theoretical findings show that the hybrid shearlet-wavelet system yields a frame that is very well suited to approximate functions with anisotropic characteristics such as jump singularities in their derivatives and, as we have seen, such anisotropic structures appear in the solutions of PDEs.

Because of this, we examine an implementation based on \textbf{SOLVE}, and analyze its convergence rate. The first step is to establish an implementation of the analysis and synthesis operators of a hybrid shearlet-wavelet system. 
Contrary to the aforementioned results, which are valid for wavelet systems $\mathcal{W}$ defined on general domains $\Omega,$ we choose $\Omega = (0,1)^2$ for our numerical example. This is because, our implementation is based on the standard wavelet library \texttt{WaveLab} \cite{Buckheit95wavelaband} 
and the shearlet library \texttt{ShearLab} \cite{shearLab}, which both operate under the assumption $\Omega =(0,1)^2$. For a given digitization of the domain $\Omega$ and a function $h$ on $\Omega$ modeled as an $n \times n$ pixel image, 
\texttt{WaveLab} provides an analysis operator $T_{\mathcal{W}}$, such that $T_{\mathcal{W}}(h)$ is a vector containing all boundary wavelet coefficients, computed with the $L^2$-scalar product, of a wavelet basis up to a certain scale. 
To turn this operator into an analysis operator based on the $H^1$-scalar product, we define 
\begin{align}\label{eq:TransformToH1}
\mathbf{T_{\mathcal{W}}}(h) := T_{\mathcal{W}}(h - \Delta_{\mathrm{discr}} h),
\end{align}
where $\Delta_{\mathrm{discr}}$ is Matlab's built-in discrete Laplacian. This definition is justified because we choose a wavelet system for which all involved wavelets vanish at the boundary of $\Omega$.

Similarly, \texttt{ShearLab} provides a built-in analysis operator $T_{\mathcal{S}}$. Using the same construction as \eqref{eq:TransformToH1} we construct an analysis operator $\mathbf{T}_{\mathcal{S}}$ with respect to the $H^1$-scalar product. Besides, the analysis operator of \texttt{ShearLab}, called $T_{\mathcal{S}}$, computes the coefficients associated to all possible translates, i.e., it effectively implements the analysis operator of the system
\begin{align}\label{eq:WhatWeOriginallyGet}
\left\{\psi_{j,k,(A_j m ), \iota} \colon 0 \leq j \leq J, \iota \in \{-1,0,1\}, |k|\leq |\iota| 2^{\lfloor j/2 \rfloor}, m \in c\Z \right\}, 
\end{align}
for a suitable $c > 0$ and a maximal scale $J$. However, comparing with Definition \ref{def:ShearletSystem}, we require an analysis operator for the system 
\begin{align}\label{eq:WhatWeWant}
\left\{\psi_{j,k, m , \iota} \colon 0 \leq j \leq J, \iota \in \{-1,0,1\}, |k|\leq |\iota| 2^{\lfloor j/2 \rfloor}, m \in c_1\Z, \suppp \psi_{j,k, m , \iota} \cap \partial \Omega = \emptyset \right\}
\end{align}
instead. To turn the analysis operator supplied by \texttt{ShearLab} into one more appropriate for our purposes, we introduce a mask $\mathbf{M_{\mathcal{S}}}$ which sets every coefficient associated with the system \eqref{eq:WhatWeOriginallyGet} that is not in \eqref{eq:WhatWeWant} to zero. Moreover, for $\tau, t>0$ we construct a mask  $\mathbf{M_{\mathcal{W}}}$ which sets every component of $\mathbf{T_{\mathcal{W}}}$ not associated to $\mathcal{W}_{t,\tau}$ to zero. Finally, to compute the coefficients with respect to the reweighted systems, we introduce two matrices $\mathbf{D}_{\mathcal{W}}$ and $\mathbf{D}_{\mathcal{S}}$ that multiply each wavelet or shearlet coefficient with $2^{-j}$ where $j$ is the associated scale. Ultimately, we now implement the analysis operator of the hybrid shearlet wavelet system by 
$$
\mathbf{T_{HSW}} := \binom{\mathbf{D}_{\mathcal{W}} \mathbf{M}_{\mathcal{W}} \mathbf{T}_{\mathcal{W}} }{\mathbf{D}_{\mathcal{S}} \mathbf{M}_{\mathcal{S}} \mathbf{T}_{\mathcal{S}}}. 
$$
In the case that $L$ denotes the Laplacian, the construction \eqref{eq:DefinitionOfL} then leads to the following discrete operator 
$$
\mathbf{L} := -\mathbf{T_{HSW}} \Delta_{\mathrm{discr}} \mathbf{T_{HSW}}^*,
$$
completing our implementation of the operator equation \eqref{eq:opereq}. 

Since the analysis operator of \texttt{ShearLab} computes the coefficients of the shearlet transform by using convolutions, it always computes all coefficients on a given scale. As a result, this transformation is inherently non-adaptive. To analyze the adaptive routine \textbf{SOLVE} we thus need to simulate the adaptivity by replacing the involved adaptive steps by threshholding. This is extensively documented in \cite[Section 4.3.1]{DissPP}.

\subsection{Experiment}

We solve a Poisson problem on a bounded domain $\Omega = (0,1)^2$ with right-hand side $f$
\begin{align} \label{eq:PoissonProblemExp}
-\Delta u &= f \text{ on }\Omega,\\
u &= 0 \text{ on }\partial \Omega,\nonumber
\end{align}
where $f= D_1g+D_2g$ and $g= \mathcal{X}_{B_{\frac{1}{6}}(0.5)},$
i.e. $g$ is a cartoon-like function and $f$ is the sum of the derivatives of $g.$ We have depicted both $g$ and $f$ in Figure \ref{fig:RHS}.

\begin{figure}[htb]
    \centering
    \includegraphics[width = 0.4 \textwidth]{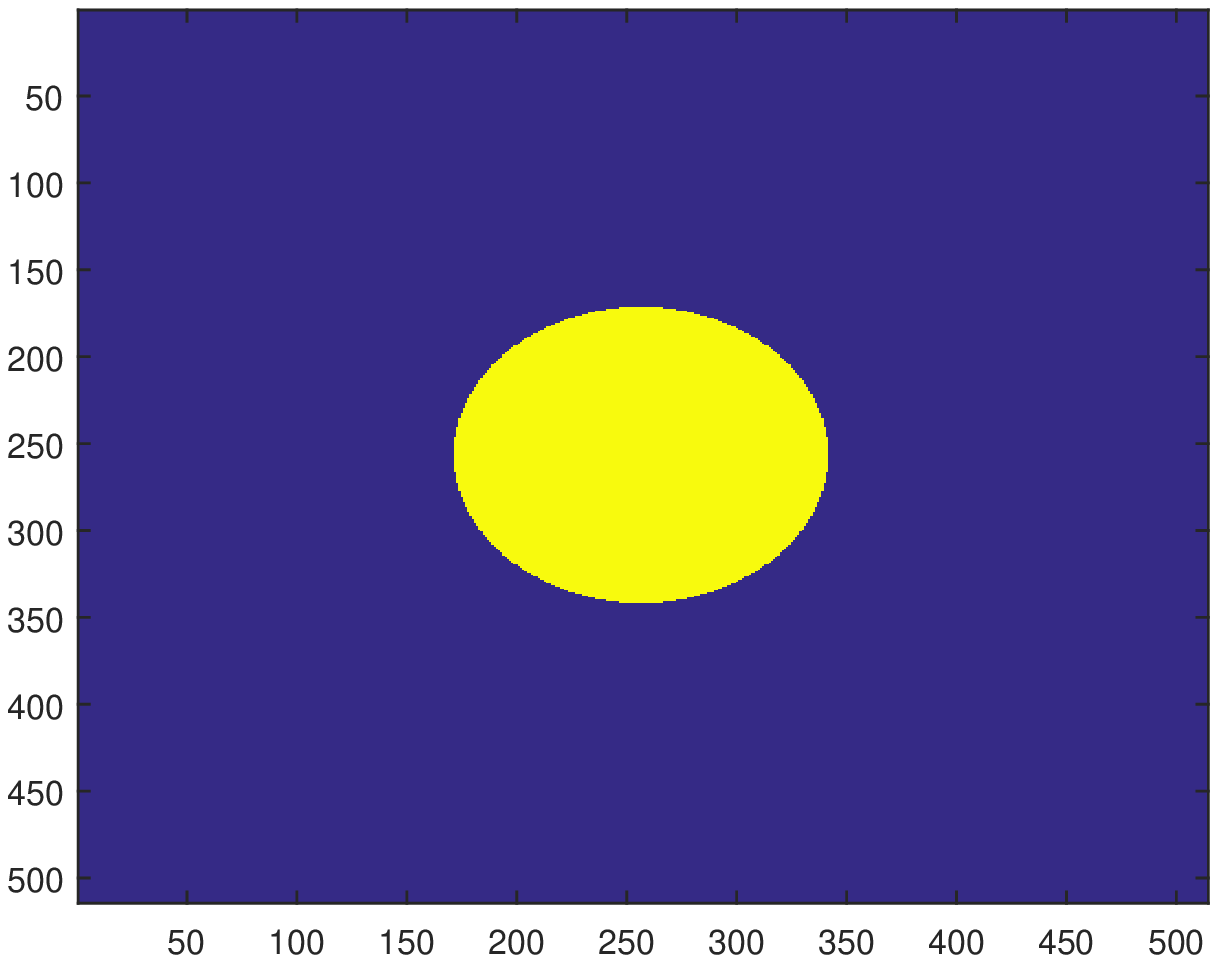}
    \includegraphics[width = 0.4 \textwidth]{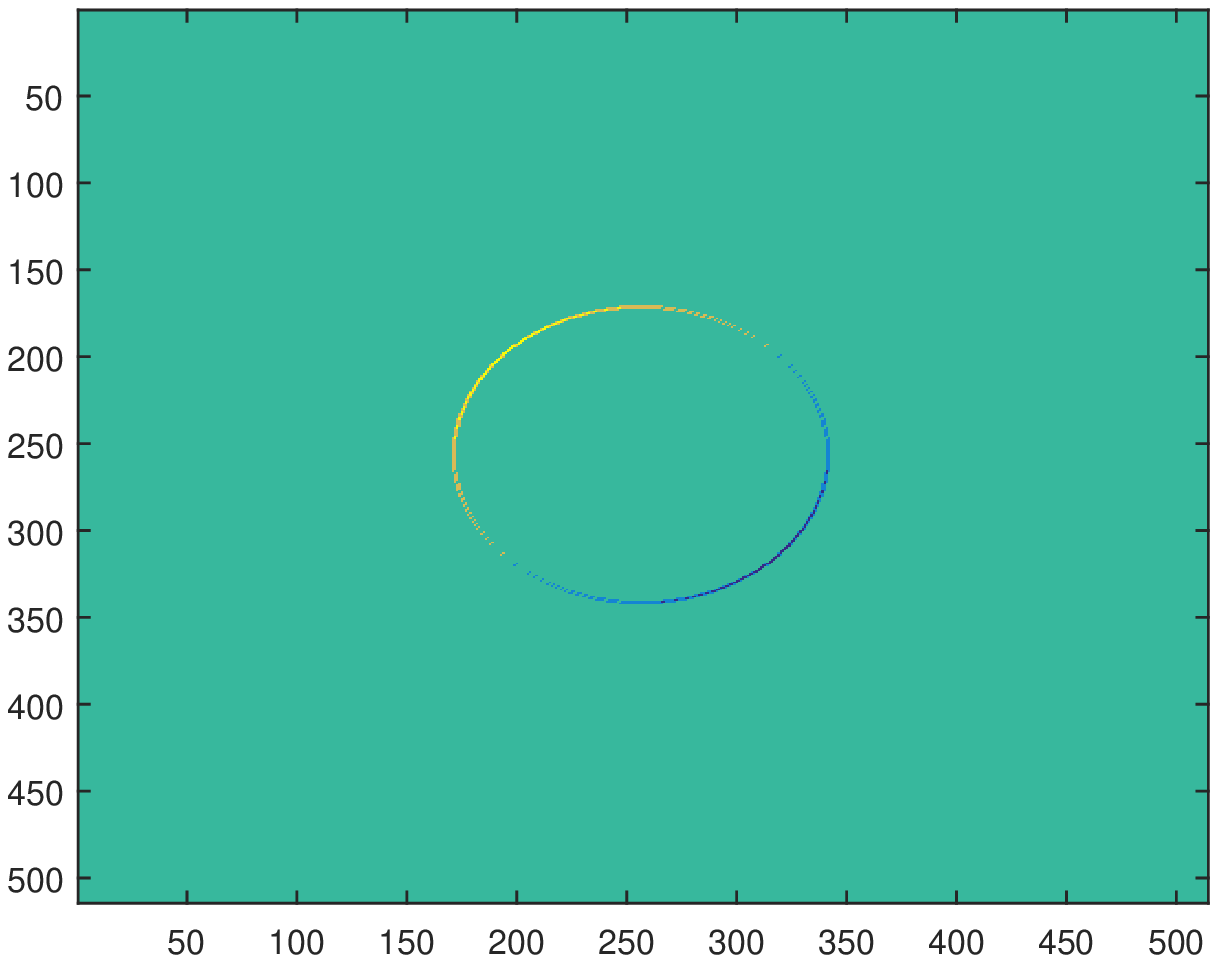}  \quad  
     \caption{\textbf{Left:} The cartoon-like function $g$. \textbf{Right:} The right-hand side $f=D_1g+D_2g$.}
    \label{fig:RHS}
\end{figure}

Moreover, the exact solution $u$ is depicted in Figure \ref{fig:Data} alongside with its derivatives. We can clearly see, that the derivatives of $u$ are cartoon-like functions. 
\begin{figure}[htb]
    \centering
    \includegraphics[width = \textwidth]{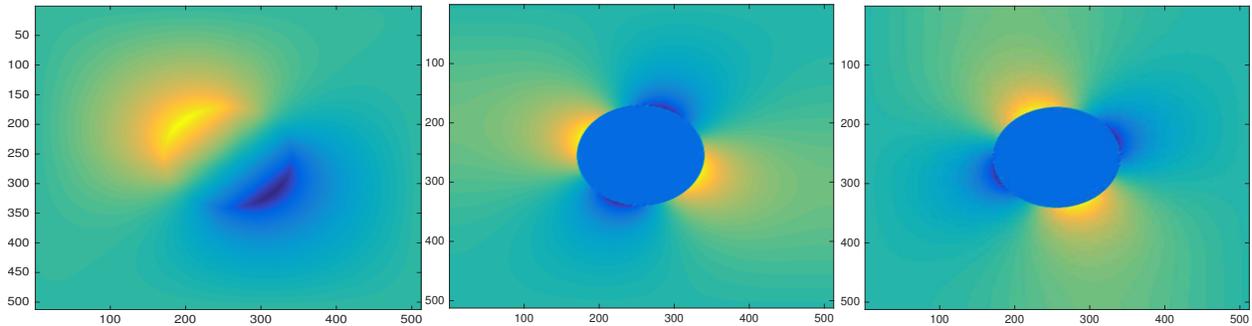}
    \caption{\textbf{Left:} Solution of the PDE \eqref{eq:PoissonProblemExp}. \textbf{Middle:} Cartoon-like derivative in $x_1$ direction, \textbf{Right:} Cartoon-like derivative in $x_2$ direction.}
    \label{fig:Data}
\end{figure}

\begin{figure}[htb]
    \centering
    \includegraphics[width = 0.4 \textwidth]{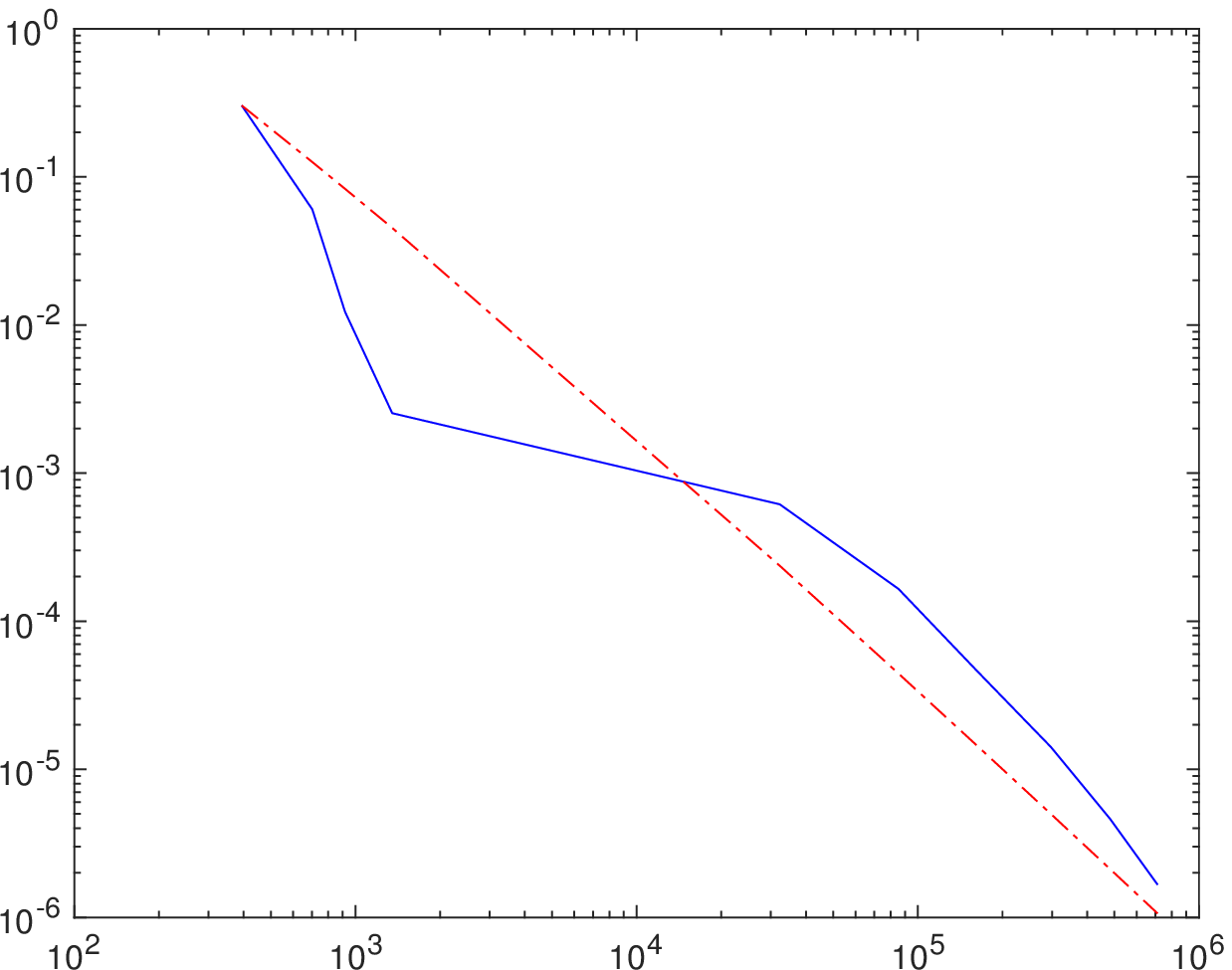}
    \includegraphics[width = 0.4 \textwidth]{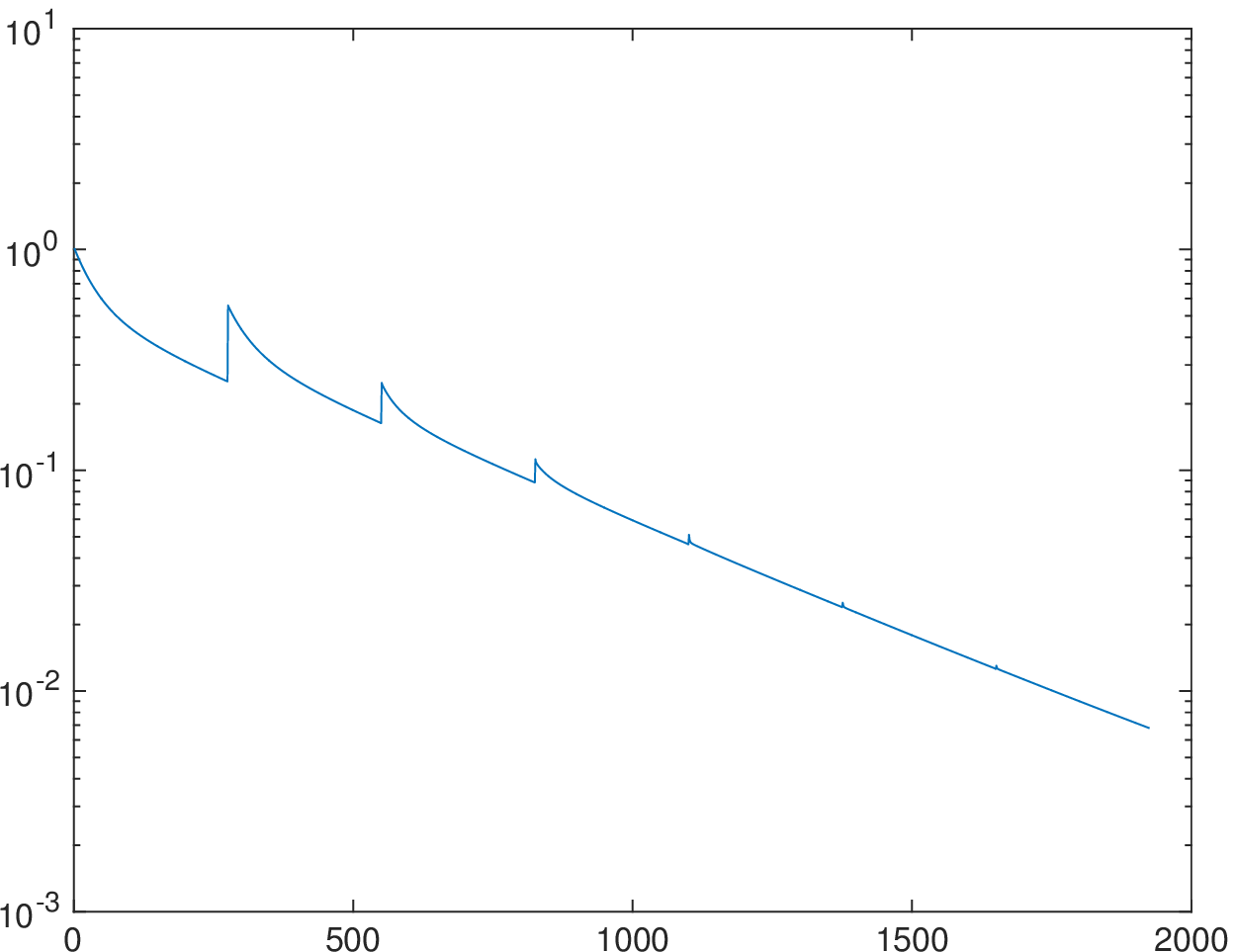}  \quad  
     \caption{\textbf{Left:} Squared error of approximation, blue actual approximation, red $N^{-1}$.  \textbf{Right:} Error in each iteration within \textbf{SOLVE} against number of iterations.}
    \label{fig:Rate}
\end{figure}

In Figure \ref{fig:Rate} we can observe an approximation rate of $O(N^{-1})$ after executing our version of \textbf{SOLVE}, defined in Subsection \ref{sec:Setup}. Moreover, since \textbf{SOLVE} yields an approximation rate of the order of the best $N$-term approximation rate, we conclude with the results of Subsection \ref{sec:comp2Wave} that the approximation rate of $O(N^{-1})$ is faster than that provided by wavelet frames, which can only achieve $O(N^{-1/2})$.  

\begin{figure}[htb]
    \centering
    \includegraphics[width = 1.1\textwidth]{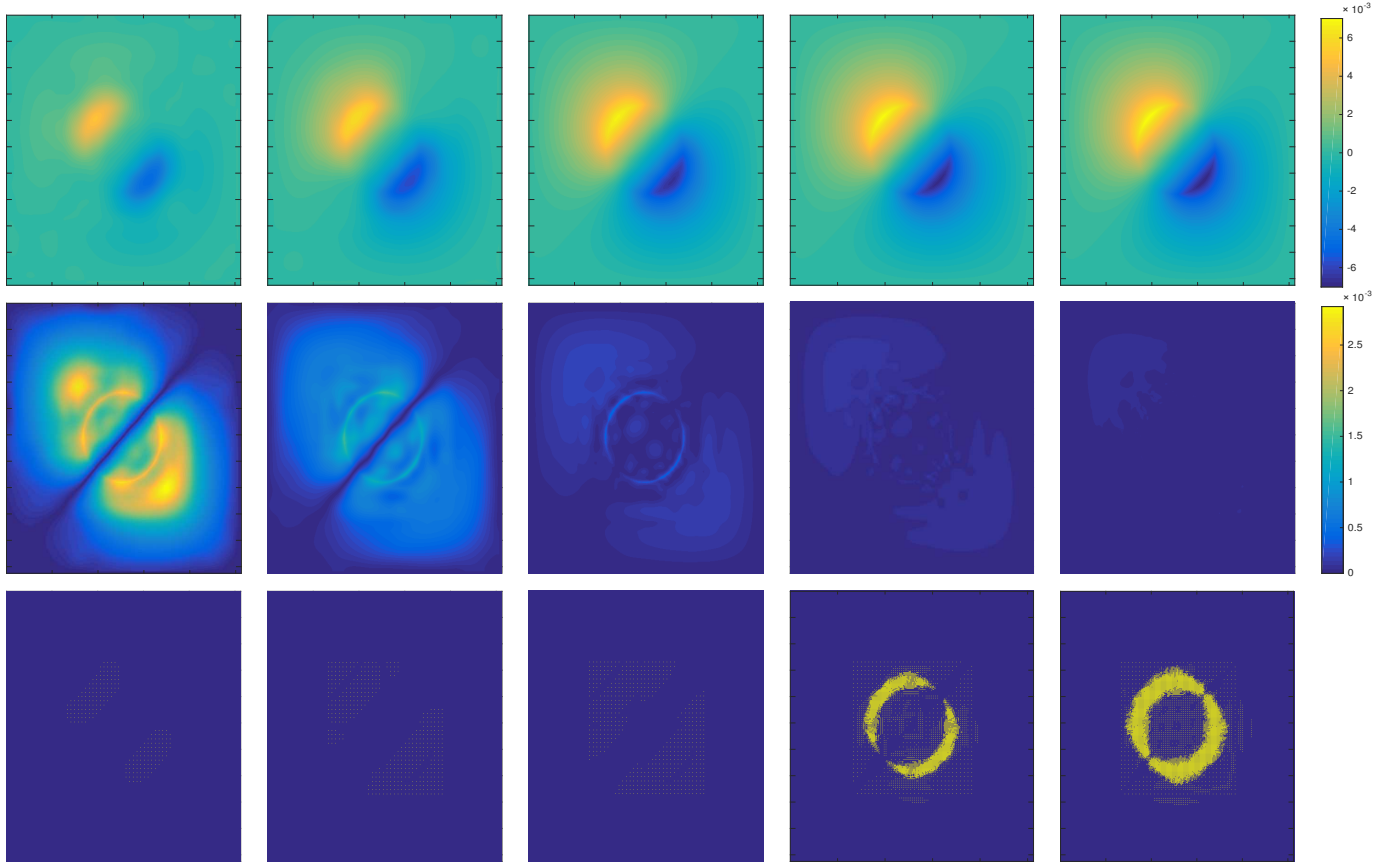}
     \caption{\textbf{Top:} Reconstructions $ \mathbf{T_{HSW}}^* v^{(i)}$ for $i = 1, \dots 5$; \textbf{Middle:} Errors of the reconstructions, i.e., $| \mathbf{T_{HSW}}^* v^{(i)} - u|$ for $i = 1, \dots 5$; \textbf{Bottom:} Active shearlet elements, i.e., the non-zero coefficients of the shearlet part of $v^{(i)}$.}
    \label{fig:Recs}
\end{figure}

On top of the study of the approximation rate, we can also analyze the approximation quality. Here we provide in Figure \ref{fig:Recs} the reconstructed functions, the error committed and the active shearlet elements. One can clearly observe, that the error vanishes uniformly, which demonstrates the good approximation quality of shearlets at positions of curvilinear singularities.

Additionally, we can observe, that the algorithm finds the elements that are most strongly associated with the jump singularity of the derivatives of $u$.

\subsection{Outlook}

The discretization of elliptic PDEs by the new system appears to be a very promising line of research. Towards a proper implementation of an adaptive scheme the following issues need to be analyzed and will be subject of future work.

\begin{itemize}
\item \emph{Mapping properties:} In order to obtain theoretical guarantees for the convergence rate of a hybrid shearlet-wavelet-based adaptive frame method, we certainly need to analyze the assumptions concerning the mapping properties of the discretized operator equation in advance. 
\item \emph{Approximation rates with respect to the primal frame:} The convergence rate of \textbf{SOLVE} depends on the $N$-term approximation rate provided by the underlying frame. Our theoretical results only provide approximation rates with respect to the dual of the hybrid shearlet-wavelet frame, which is standard in shearlet literature. Nonetheless, in order to guarantee the approximation rates of \textbf{SOLVE} a proper analysis of the $N$-term approximation rates with respect to the primal frame has to be conducted.
\item \emph{Implementation for a model problem:} As we mentioned before, our current implementation is based on thresholding procedures because it is based on the available shearlet code. Clearly, an implementation of a hybrid shearlet-wavelet-based solver that carries out all the adaptive routines properly, is desirable and will be developed in the future.
\end{itemize}

\section*{Acknowledgments}
P. Petersen and M. Raslan thank P. Grohs and G. Kutyniok for valuable discussions. P. Petersen and M. Raslan acknowledge support from the DFG Collaborative Research Center TRR 109 ``Discretization in Geometry and Dynamics'' and the Berlin Mathematical School.  P. Petersen is supported by a DFG Research Fellowship ``Shearlet-based energy functionals for anisotropic phase-field methods".

\bibliographystyle{plain}
\bibliography{Bib.bbl}

\end{document}